\newcommand{\Ve}{V_{\ensuremath{\mathop{\mathrm{eff}}}}}
\newcommand{\We}{W_{\ensuremath{\mathop{\mathrm{eff}}}}}
\newcommand{\cVe}{\cV_{\ensuremath{\mathop{\mathrm{eff}}}}}
\begin{document}
\maketitle

\begin{abstract}
	In this work, we investigate a model order reduction scheme for polynomial parametric systems. We begin with defining the generalized multivariate transfer functions for the system. Based on this, we aim at constructing a reduced-order system, interpolating the defined generalized transfer functions at a given set of  interpolation points. Furthermore, we provide a method, inspired by the Loewner approach for linear and (quadratic-)bilinear systems, to determine a good-quality reduced-order system in an automatic way. We also discuss the computational issues related to the proposed method and a potential application of CUR matrix approximation in order to further speed-up simulations of reduced-order systems. We test the efficiency of the proposed methods via several numerical examples. 
\end{abstract}

\begin{keywords}
Model order reduction, interpolation, tensor algebra, matricization, polynomial dynamical systems, parametric systems, transfer functions.
\end{keywords}

\begin{AMS}
15A69, 34C20, 41A05, 49M05, 93A15, 93C10, 93C15
\end{AMS}

\section{Introduction}   
An accurate solution of time-dependent partial differential equations (PDEs), or  ordinary differential equations (ODEs), or a combination of both requires a fine spatial discretization of the governing equations. This leads to a large number of equations, thus a high-dimensional system. This inevitably imposes a huge burden on computational resources, and more often than not, it is \emph{almost} impossible to make use of such high-dimensional systems in engineering problem like, \emph{e.g.}, optimization or control. A way to resolve this issue is to construct a reduced-order system or a low-dimensional model, replicating the important dynamics of the original system. 

In this paper, we focus on  parametric polynomial systems of the form:
\begin{equation}
  \begin{aligned}
      E(\bp)\dot{x}(t,\bp) &= A(\bp)x(t,\bp) + \sum_{\xi = 2}^d H_{\xi}(\bp)\kronF{x}{$\xi$}(t,\bp) + \sum_{\eta = 1}^d N_{\eta}(\bp)\left(u(t)\otimes x^{\circled{\tiny {$\eta$}}}(t,\bp)\right) \\
      &\hspace{5.4cm} + B(\bp)u(t), \quad x(0,\bp) = 0,\\
      y(t,\bp) &= C(\bp)x(t,\bp),
  \end{aligned}\label{eq:para_poly_sys}
\end{equation}
where $E(\bp), A(\bp) \in \Rnn$, $B(\bp) \in \Rnm$, $C(\bp) \in \Rqn$, $H_\xi(\bp) \in \R^{n\times n^\xi}$, $\xi \in \{2,\ldots,d\}$, $N_\eta(\bp) \in \R^{n\times m\cdot n^\eta}$, $\eta \in \{1,\ldots,d\}$; the state, input and output vectors are $x(t) \in \Rn$, $u(t) \in \Rm$ and $y(t) \in \Rq$, respectively;  $x^{\circled{\tiny {$\xi$}}} := \underbrace{x\otimes \cdots\otimes x}_{\xi-\text{times}}$ and the parameter vector is denoted $\bp \in \R^{n_p}$.   Since the system \eqref{eq:para_poly_sys} has polynomial terms of the order up to $d$, we refer to it as a \emph{$d$-th order polynomial system}. The system \eqref{eq:para_poly_sys} lies in $n$-dimensional Euclidean  space and generally, $n$ is in $\cO\left(10^5\right)-\cO\left(10^6\right)$. Due to the computational burden mentioned above, we seek to construct a low-dimensional system, having the same structure as \eqref{eq:para_poly_sys}, which captures the dynamics of the original system \eqref{eq:para_poly_sys} for any given input $u(t)$ and parameter $\bp$ in a desired domain. 
 
Many of the widely used methods in model order reduction (MOR) to construct low-dimensional models for (parametric) nonlinear systems are based on \emph{snapshots}. This means that the state vector $x$ at time $t$ needs to be evaluated for a given input and parameter.  In this category, proper orthogonal decomposition is arguably the most favored method. This relies on determining the dominant subspace for the state vectors through singular value decomposition (SVD) of the collected snapshots, which is generally followed by computing a reduced-order system via Galerkin projection. For more details, we refer to~\cite{GubischV17mor}. For nonlinear systems, it is often combined with hyper-reduction methods such as EIM~\cite{morBarMNetal04} and DEIM~\cite{morChaS10} to further reduce computational costs related to the reduced nonlinear terms. Another widely known method in this category is the \emph{trajectory piecewise linear} method, in which a nonlinear system is approximated by a weighted sum of linearized systems (linearized along the trajectory). Then, each linear system is reduced using popular methods for linear systems such as balanced truncation or iterative methods, see, e.g.~\cite{morAnt05,morBenCOetal17,morBenMS05,morGugAB08}. Moreover, reduced basis methods, which are also snapshots-based methods, have been successfully applied to several nonlinear parametric systems, see, e.g., \cite{morQuaMN16}. Although these methods have been very successful, they share a common drawback of being dependent on snapshots, or in other words, simulations for given inputs and parameters. Hence, it may become  harder to obtain a reduced-order system to use e.g., in control. 

In this work, we rather focus on MOR methods, allowing us to determine reduced-order systems without any prior knowledge of inputs. There are, broadly speaking, two types of such methods, namely interpolation-based approaches and balanced truncation.  Recently, there have been significant efforts to extend these methods from linear to special classes of non-parametric polynomial systems, namely bilinear systems, and quadratic-bilinear systems, see, e.g., \cite{antoulas2016model,morBenB12b,morBenB15,morBenG17,morBenGG18,gosea2018data}. For parametric nonlinear systems, there has been a very recent work for bilinear parametric systems \cite{rodriguez2018interpolatory}, where the construction of an interpolating reduced system has been proposed for a  given set of interpolation points, and such a problem for quadratic-bilinear parametric systems still remains to be studied.

In this paper, we investigate an interpolation-based MOR scheme to obtain a reduced-order system for the parametric system \eqref{eq:para_poly_sys}. For this, we first define generalized transfer functions for the system \eqref{eq:para_poly_sys}. Based on this, we aim at constructing a reduced-order system such that its generalized transfer functions interpolate those of the original system at a given set of interpolation points for the frequency and parameters. Furthermore, we propose a scheme, inspired by the Loewner approach for linear and (quadratic-)bilinear systems \cite{antoulas2016model,gosea2018data}, thus leading to an algorithm that allows us to construct a good quality reduced-order system in an automatic fashion. Furthermore, we discuss related computational aspects and an application of pseudo-skeletal matrix approximation, the so-called CUR, to further reduce the computational complexity related to the reduced nonlinear terms. 

The remaining structure of the paper is as follows. In the following section, we discuss polynomialization of nonlinear systems and recap some basic concepts from tensor algebra. In \Cref{sec:TF_interpolation}, we present the generalized transfer functions corresponding to \eqref{eq:para_poly_sys} for a fixed parameter vector and discuss the construction of an interpolating reduced-order system using  Petrov-Galerkin projection. Based on this, we propose an algorithm which allows us to determine a good quality reduced-order system in an automatic fashion. In \Cref{sec:computationalCUR}, we discuss the related computational aspects and investigate an application of CUR matrix approximation to further reduce the complexity of the reduced nonlinear terms. In the subsequent section, we extend the proposed method to polynomial parametric systems. In \Cref{sec:Numerics}, we illustrate the efficiency of the proposed algorithms by means of two benchmark problems and their variants. We conclude the paper with a summary of our contributions and future perspectives. 

We make use of the following notation in the paper:
\begin{itemize}
	\item \texttt{orth}():  it returns an orthonormal basis of a given matrix.
	\item The Hadamard product and Kronecker product are denoted by `$\circ$' and `$\otimes$', respectively.
	\item Using \matlab~notation, $A(:,1 \mathbin{:} r)$ denotes the first $r$ columns of the matrix $A$, and $A(i,j)$ is the $(i,j)$th element of the matrix $A$.
	\item $I_m$ is the identity matrix of size $m\times m$.
	\item $\cV^{\circled{\tiny{$\xi$}}}$ is a short-hand notation for $\underbrace{\cV\otimes \cdots\otimes \cV}_{\xi-\text{times}}$, where $\cV$ is a vector/matrix.
\end{itemize}

\section{Polynomialization of Nonlinear Systems and Tensor Algebra}
In this section, we recap two topics. We begin with the polynomialization of nonlinear systems. 
\subsection{Polynomialization of nonlinear systems} \label{subsec:polynomialization}
A class of nonlinear systems, containing nonlinear terms such as exponential, trigonometric, rational, can be rewritten as a polynomial system \eqref{eq:para_poly_sys}, by introducing some auxiliary variables. This process is very closely related to the McCormick relaxation, used in nonconvex optimization \cite{mccormick1976computability}.  In the recent past, due to advances in the methodologies for MOR for quadratic-bilinear (QB) systems, there has been a substantial focus on rewriting a nonlinear system into the QB form. However, in the subsection, we will illustrate with an example how a polynomialization of a nonlinear system is  done by introducing less auxiliary variables as compared to its quadratic-bilinearization. 
\subsection*{An illustrative example} Let us consider the following  one-dimensional nonlinear ODE:
\begin{subequations}\label{eq:artificialExa}
\begin{align}
	\dot{x}(t) &=  -x(t) - x^3(t)  \cdot e^{-x(t)} + u(t),\\
	y(t) &= x(t).
\end{align}
\end{subequations}
Now, we seek to rewrite the system \eqref{eq:artificialExa} as a polynomial system via polynomialization. For this, we introduce an auxiliary variable as $z(t) := e^{-x(t)}$ and derive the corresponding differential equation. That is
\begin{equation*}
\dot{z}(t) = -e^{-x(t)}\dot{x}(t) = -z(t)\left(-x(t) -x^3(t) z(t) + u(t)\right).
\end{equation*}
Thus, we can equivalently write the input-output system \eqref{eq:artificialExa} as follows:
\begin{align*}	
	\bbm \dot{x}(t) \\ \dot{z}(t) \ebm &= \bbm -x(t)  \\ 0 \ebm  + \bbm 0\\ x(t)z(t) \ebm - \bbm  x^3(t) z(t) \\  0 \ebm + \bbm 0\\   x^3(t)z^2(t) \ebm - \bbm 0 \\ z(t)\ebm u(t) + \bbm u(t) \\ 0\ebm,\\
	y(t) &= \bbm 1&0\ebm \bbm x(t) \\z(t)\ebm.
\end{align*}
As can be seen, the system  \eqref{eq:artificialExa}, which has cubic exponential nonlinearity, can be rewritten into a polynomial system~\eqref{eq:poly_sys} of order $5$  by introducing a single variable. However, if one aims at rewriting the system into the QB form, then we need to introduce at least $3$ more auxiliary variables, which somehow also makes the resulting system even more complicated, hence, also impeding its model reduction. Therefore, it is  advantageous to  work with the polynomial system of order $5$ and thus, we aim at reducing polynomial systems with MOR schemes for polynomial systems. 

Furthermore, we emphasis that it still remains an open problem how many auxiliary variables are required minimally in order to rewrite a smooth nonlinear system into a polynomial system, which demands further research. However, we mention that there has some been some initial work in \cite{morGu11} in this direction. 
\subsection{Tensor Algebra} As the nonlinear part of the considered systems are written in Kronecker (tensor) format, we will need a number of tensor based calculations in the reminder of this paper. We will review or introduce the necessary concepts in this subsection. 
A tensor is a multidimensional or an $N$-way array. An $N$th-order tensor $\boldsymbol \cX \in \R^{n_1\times \cdots\times n_N}$ is an $N$-dimensional array with entries $\cX(i_1,\ldots,i_N)$ $ \in \R$, where $i_j \in \{1,\dots,n_j\}, j \in \{1,\ldots,N\}$. For illustration, in \Cref{fig:3rdOrderTensor}, we present an illustration of third-order tensor. 
\begin{figure}[!tb]
	\centering
	\includegraphics{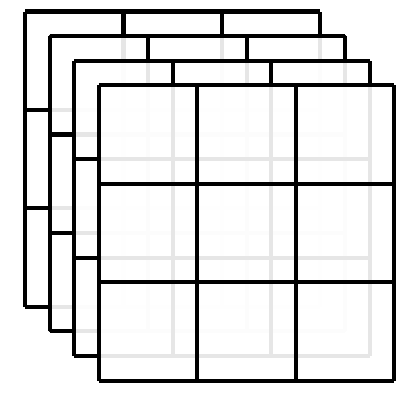}
	\caption{Illustration of a third-order tensor.}
	\label{fig:3rdOrderTensor}
\end{figure}
 An important concept of a tensor is the so-called  \emph{matricization}.  This allows us to unfold a tensor into a matrix, which plays a crucial role in tensor computations. For an $N$th order tensor, there are $N$ different ways to unfold as a matrix.  In the following, we define mode-$n$ matricization of a tensor $ \bten{X}$. 
 \begin{definition}[e.g., \cite{kolda2006multilinear}]
	The mode-$n$ matricization of a tensor $\bten{X} \in  \R^{n_1\times \cdots\times n_N}$, denoted by $\boldsymbol X_{(n)}$, satisfies the following mapping:
	\begin{equation*}
		\boldsymbol{X}_{(n)}(i_n,j) = \bten{X}(i_1,\ldots,i_N),
	\end{equation*}
	where $j = 1+ \sum\limits_{k=1,k\neq n}^N(i_k-1)J_k$ with $J_k = \prod\limits_{m=1,m\neq n}^{k-1}n_m$. 
\end{definition}

Like matrix-vector and matrix-matrix products, tensor-tensor, tensor-matrix and tensor-vector products can be defined; however, the notation becomes quite cumbersome.  In the following, we present a connection between the mode-$n$ matricization and Kronecker products. For this, we define the following tensor-matrix product:
\begin{equation*}
 \bten{Y} = \bten{X} \times_1 \mathrm A^{(1)} \times_2 \mathrm A^{(2)} \cdots \times_N\mathrm A^{(N)},
\end{equation*}
where $\mathrm A^{(l)} \in \R^{J_l\times n_l }$. Then, we have the following relation:
\begin{equation}\label{eq:matricizationRelationMatrix}
 Y_{(n)} =\mathrm A^{(n)}X_{(n)}\left(\mathrm A^{(N)}\otimes \cdots\otimes\mathrm A^{(n+1)} \otimes\mathrm A^{(n-1)} \otimes \mathrm A^{(1)} \right)^T.
\end{equation}
Of particular interest of the paper, we explicitly note down the results for tensor-vector products as well. For this, let us define the following product:
\begin{equation}
\bten{Y} = \bten{X} \bar\times_1\mathrm a_1\bar\times_2\cdots \bar\times_M\mathrm a_M,
\end{equation}
where $M \leq N$ and $\mathrm a_m\in \R^{j_m}$, $m \in \{1,\ldots,M\}$. Then, using \cite[Prop. 3.7]{kolda2006multilinear}, we define
\begin{equation}\label{eq:matricizationRelation}
\boldsymbol{Y}_{(n)} = a_n^T\boldsymbol{X}_{(n)}\left( \mathrm a_M\otimes\cdots\otimes \mathrm a_{n+1}\otimes \mathrm  a_{n-1}\otimes \cdots \otimes \mathrm a_1 \right), \quad n \in \{1,\ldots,M\}.
\end{equation}
Furthermore, we consider a special case, which is very useful later in the paper, that is when $N = M$. In this case, the quantity 
\begin{equation*}
\bten{X} \bar\times_1\mathrm a_1\bar\times_2\cdots \bar\times_N\mathrm a_N =: \Xi
\end{equation*}
 is a scalar. Hence, using \eqref{eq:matricizationRelation}, we obtain the following relation:
\begin{equation}
\begin{aligned}
&a_1^T\boldsymbol{X}_{(1)}\left( \mathrm a_N\otimes\cdots\otimes  \mathrm a_2 \right) = a_2^T\boldsymbol{X}_{(2)}\left( \mathrm a_N\otimes\cdots\otimes a_3 \otimes a_1 \right) = \cdots\\ &\hspace{7.5cm} = a_N^T\boldsymbol{X}_{(N)}\left( \mathrm a_{N-1}\otimes\cdots\otimes \mathrm a_1 \right).
\end{aligned}
\end{equation}
For further details on tensor concepts such as tensor-matrix, and tensor-vector multiplications, and matricization, we refer readers to \cite{kolda2006multilinear} and references therein.
\section{Construction of Interpolating Reduced-Order Systems}\label{sec:TF_interpolation}
In this section, we discuss the construction of interpolating reduced-order systems. For simplicity, we begin with non-parametric polynomial systems as follows:
\begin{equation}\label{eq:poly_sys}
 \begin{aligned}
 \dot{x}(t) &= Ax(t) + \sum_{\xi = 2}^d H_{\xi}x^{\circled{\tiny {$\xi$}}}(t)+ \sum_{\eta = 1}^d N_{\eta}\left(u(t)\otimes x^{\circled{\tiny {$\eta$}}}(t)\right) + Bu(t), \quad x(0) = 0,\\
 y(t) &= Cx(t),
 \end{aligned}
\end{equation}
where $x(t)\in \Rn$, $u(t)\in \Rm$ and $y(t) \in \Rq$ are state, input and output vectors respectively, and all other matrices are constants and are of appropriate size. Moreover, the system \eqref{eq:poly_sys} is referred to as a single-input single-output (SISO) system when $q=m=1$; otherwise, it is referred to as a multi-input multi-output system~(MIMO).
\subsection{Reduced-order modeling for SISO systems}
We begin with considering SISO polynomial systems \eqref{eq:poly_sys}. As a first step towards developing a MOR scheme for the system, we aim at defining  the generalized multivariate transfer functions. Following the steps as shown in \cite{morBenGG18} for QB systems, we write the Volterra series corresponding to the system \eqref{eq:poly_sys} as follows:
\begin{multline}\label{eq:system_state}
    x(t) = \int_0^t e^{A\sigma_1} Bu(t_{\sigma_1})d\sigma_1 + \sum_{\xi = 2}^d \int_0^t e^{A\sigma_1} H_\xi x^{\circled{\tiny {$\xi$}}} (t_{\sigma_1}) d\sigma_1 \\ 
    + \int_0^t \sum_{\eta = 1}^d e^{A\sigma_1} N_{\eta} \left.x^{\circled{\tiny {$\xi$}}}(t_{\sigma_1})\right.  u(t_{\sigma_1}) d\sigma_1,
 \end{multline}
where $t_{\sigma_1} := t-\sigma_1$. The above equation also allows us to express $x(t_{\sigma_1})$ as follows:
\begin{multline}\label{eq:expressionxsigma1}
    x(t_{\sigma_1}) = \int_0^{t_{\sigma_1}} e^{A\sigma_2} Bu(t_{\sigma_{1}}{-}\sigma_2)d\sigma_2 + \sum_{\xi = 2}^d\int_0^{t_{\sigma_1}} e^{A\sigma_2} H_\xi x^{\circled{\tiny {$\xi$}}}(t_{\sigma_1}{-}\sigma_2) d\sigma_2 \\
     + \sum_{\eta = 1}^d \int_0^{t_{\sigma_1}} e^{At_{\sigma_2}} N_\eta\left. x^{\circled{\tiny {$\eta$}}}(t_{\sigma_1}{-}\sigma_2) \right. u(t_{\sigma_1}{-}\sigma_2) d\sigma_2.
 \end{multline}
Substituting the expression in \eqref{eq:expressionxsigma1} for $x(t_{\sigma_1})$ in \eqref{eq:system_state} and multiplying by $C$ yields
\begin{align*}\allowdisplaybreaks
 y(t) &= \int_0^t Ce^{A\sigma_1} Bu(t_{\sigma_1})d\sigma_1 \\
 &\qquad+ \sum_{\xi = 2}^d \int_0^t \underbrace{\int_0^{t_{\sigma_1}}{\cdots} \int_0^{t_{\sigma_1}}}_{\xi-\text{times}} Ce^{A\sigma_1}  H_\xi \left(e^{A\sigma_2}B \otimes \cdots \otimes e^{A\sigma_{\xi+1}}B \right)d\sigma_1 d\sigma_2\cdots d\sigma_{\xi+1} \\
 &\qquad + \sum_{\eta = 1}^d \int_0^t \underbrace{\int_0^{t_{\sigma_1}}\cdots \int_0^{t_{\sigma_1}}}_{\eta-\text{times}} Ce^{A\sigma_1} N_\eta\left(e^{A\sigma_2}B \otimes \cdots \otimes e^{A\sigma_{\eta+1}}B \right)\\
 &\qquad \times \left(u(t_{\sigma_1})  u(t_{\sigma_1}-\sigma_2) \cdots  u(t_{\sigma_1}-\sigma_{\eta+1}) \right) d\sigma_1 d\sigma_2\cdots d\sigma_{\eta+1} + \cdots.
 \end{align*}
As the above Volterra series, corresponding to the system \eqref{eq:poly_sys}, is cumbersome and contains infinitely many terms, we consider only the leading \emph{kernels} of the series, which are as follows:
\begin{subequations}
\begin{align}
 f_L(t_1) &:= Ce^{At_1} B,\\
 f^{(\xi)}_H(t_1,\ldots,t_{\xi+1}) &:= Ce^{At_1}  H_\xi \left(e^{At_2}B \otimes \cdots \otimes e^{At_{\xi+1}}B\right), \\
 f^{(\eta)}_N(t_1,\ldots,t_{\eta+1}) &:= Ce^{At_1} N_\eta\left(e^{At_2}B \otimes \cdots \otimes e^{At_{\eta+1}}B \right),
\end{align}
\end{subequations}
where $\xi \in \{2,\ldots,d\}$ and $\eta \in \{1,\ldots,d\}$. Furthermore, taking the multivariate Laplace transform (see, e.g., \cite{rugh1981nonlinear}) of the above kernels, we get the frequency-domain representations of the kernels as follows:
\begin{subequations}\label{eq:general_TF}
\begin{align}
 \bF_L(s_1) &:= \cL(f_L) =  C\Phi(s_1)B,\\
 \bF^{(\xi)}_H(s_1,\ldots,s_{\xi+1}) &:= \cL(f^{(\xi)}_H) =  C\Phi(s_{\xi+1}) H_\xi \left(\Phi(s_\xi)B \otimes \cdots \otimes \Phi(s_{1})B\right),\\
 \bF_N^{(\eta)}(s_1,\ldots, s_{\eta+1}) &:= \cL(f_N^{(\eta)}) =  C \Phi(s_{\eta+1}) N_\eta\left(\Phi(s_{\eta})B \otimes \cdots \otimes \Phi(s_{1})B \right),
\end{align}
\end{subequations}
where $\Phi(s) = (sI_n{-}A)^{-1}$ is the so-called state transition matrix, and $\cL(\cdot)$ denotes the multivariate Laplace transform. In the above, we have assumed that the mass matrix in front of $\dot x(t)$ in \eqref{eq:poly_sys} is $E = I_n$; however, one can also perform the above algebra to derive the multivariate transfer function for $E\neq I_n$ but invertible. In this case, we can also obtain the multivariate transfer functions as in \eqref{eq:general_TF}, where the matrix $\Phi(s)$ will be $(sE{-}A)^{-1}$ instead of $(sI{-}A)^{-1}$. In the rest of the paper, we assume that the matrix $E$ is an invertible matrix. We aim at constructing reduced-order systems, having a similar structure as in \eqref{eq:poly_sys}, as follows:
\begin{equation}\label{eq:red_poly_sys}
\begin{aligned}
\hE\dot{\hx}(t) &= \hA\hx(t) + \sum_{\xi = 2}^d \hH_{\xi}\hx^{\circled{\tiny {$\xi$}}}(t)+ \sum_{\eta = 1}^d \hN_{\eta}\left(u(t)\otimes \hx^{\circled{\tiny {$\eta$}}}(t)\right) + \hB u(t), \quad \hx(0) = 0,\\
\hy(t) &= \hC\hx(t),
\end{aligned}
\end{equation}
where $\hx(t)\in \Rr$, $u(t)\in \R$ and $\hy(t) \in \R$ are reduced state, input and  output vectors, respectively with $r\ll n$, and all other matrices are of appropriate size. To that end, our goal is to construct reduced-order systems \eqref{eq:red_poly_sys} using Petro-Galerkin projection such that the multivariate transfer functions, as given in \eqref{eq:general_TF}, of the original system match with those of the reduced-order system at a given set of interpolation points. 
For this, we essentially require projection matrices $V\in \Rnr$ and $W \in \Rnr$, thus leading to the system matrices of \eqref{eq:red_poly_sys} as follows:
\begin{equation}\label{eq:compute_rom}
\begin{aligned}
	\hE &= W^TAV, & \hA& = W^TAV, & \hH_{\xi}& = W^TH_{\xi}V^{\circled{\tiny{$\xi$}}},\quad \xi \in \{2,\ldots,d\},\\
	\hB &= W^TB, & \hC& = CV, & \hN_{\eta}& = W^TN_{\eta}V^{\circled{\tiny{$\eta$}}},\quad \eta\in \{1,\ldots,d\}
\end{aligned}
\end{equation}
with $x(t) \approx V\hx(t)$. Clearly, the choice of the matrices $V$ and $W$ must ensure the desired interpolating properties of the original and reduced-order systems and also determines the quality of the reduced-order system. Thus, in the following theorem, we reveal the construction of the projection matrices $V$ and $W$, yielding an interpolating reduced-order system.

\begin{theorem}\label{thm:gen_interpolation}
	Consider a SISO system as given in \eqref{eq:poly_sys}. Let $\sigma_i$ and $\mu_i$, $i \in\{ 1,\ldots,\tilde r\}$, be interpolation points such that $(sE-A)$ is invertible for all $s = \{\sigma_i,\mu_i\}$, $i \in \{1,\ldots,\tilde r\}$. Moreover, let the projection matrices $V$ and $W$ be as follows:
	 \begin{subequations}\allowdisplaybreaks
		\begin{align*}
		V_{L}	&= 	\range{\Phi(\sigma_1)B,\ldots, \Phi(\sigma_{\tr})B},\\
		V_{N} 	&= 	\bigcup_{\eta= 1}^d \range{ \Phi(\lambda_{\eta+1}) N_\eta\left(\Phi(\lambda_\eta)B \otimes \cdots \otimes \Phi(\lambda_1)B \right)},  \\
		V_H 	&=  	\bigcup_{\xi= 2}^d  \range{ \Phi(\lambda_{\xi+1}) H_\xi \left(\Phi(\lambda_{\xi})B \otimes \cdots \otimes \Phi(\lambda_1)B \right) },\\ 
		W_L	&= 	\range{\Phi(\mu_1)^TC^T,\ldots, \Phi(\mu_{\tr})^TC^T},\\
		W_{N} 	&= 	\bigcup_{\eta= 1}^d \range{ \Phi(\lambda_{1})^T \left(N_\eta \right)_{(2)}\left(\Phi(\lambda_\eta)B \otimes \cdots \otimes \Phi(\lambda_2)B \otimes\Phi(\beta)^TC^T \right)}, \\
		W_H 	&=  	\bigcup_{\xi= 2}^d  \range{ \Phi(\lambda_{1})^T \left(H_\xi\right)_{(2)} \left(\Phi(\lambda_{\xi})B \otimes \cdots \otimes \Phi(\beta)^TC^T \right) },\\
		V 	&= 	\range{V_L, V_N, V_H},\\
		W	&= 	\range{W_L, W_N, W_H},
	\end{align*}
\end{subequations}
where $\Phi(s) := (sE-A)^{-1}$, $\lambda_i \in \{\sigma_1,\ldots,\sigma_{\tr}\}, i \in \{1,\ldots,d+1\}, \beta \in \{\mu_1,\ldots,\mu_{\tr}\}$, and $\left(H_\xi\right)_{(2)} \in \R^{n\times n^\xi}$ and $\left(N_\eta\right)_{(2)} \in \R^{n\times m\cdot n^\xi}$ are, respectively, the mode-2 matricizations of the $(\xi{+}1)$-way tensor $\bten{H}_\xi \in \R^{n\times \cdots \times n}$ and $(\eta{+}2)$-way tensor $\bten{N}_\eta \in \R^{n\times \cdots \times n}$ whose mode-1 matricizations are $H_\xi$  and $N_\eta$, respectively.  Assume $V$ and $W$ are of full column rank. If a reduced-order system is computed as shown in \eqref{eq:compute_rom} using the matrices $V$ and $W$, then the reduced-order system satisfies the following interpolation conditions:
	\begin{subequations}
	\begin{align}
	\bF_L(\lambda_1)					&= 	\hat{\bF}_L(\lambda_1), \label{eq:linear1} \\
	\bF_L(\beta)						&= 	\hat{\bF}_L(\beta), \label{eq:linear2}\\	
	\bF_N^{(\eta)}(\lambda_1,\ldots,\lambda_{\eta+1}) 	&= 	\hat \bF_N^{(\eta)}(\lambda_1,\ldots,\lambda_{\eta+1}),\label{eq:bilinear1} \\
	\bF_N^{(\eta)}(\lambda_1,\ldots,\lambda_{\eta},\beta) 	&=	\hat \bF_N^{(\eta)}(\lambda_1,\ldots,\lambda_{\eta},\beta), \label{eq:bilinear2}\\
	\bF_H^{(\xi)}(\lambda_1,\ldots,\lambda_{\xi+1}) 	&= 	\hat \bF_H^{(\xi)}(\lambda_1,\ldots,\lambda_{\xi+1}), \label{eq:Qbilinear1}\\
	\bF_H^{(\xi)}(\lambda_1,\ldots,\lambda_{\eta},\beta) 	&= 	\hat \bF_H^{(\xi)}(\lambda_1,\ldots,\lambda_{\eta},\beta)\label{eq:Qbilinear2}.
	\end{align}
\end{subequations}
\end{theorem}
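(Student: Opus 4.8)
### Proof Proposal

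The plan is to establish each interpolation condition by exploiting the column space inclusions built into $V$ and $W$, together with the standard "projected resolvent" identity from rational interpolation. The key algebraic fact I would isolate first is the following: if $V$ has full column rank and $\range{\Phi(\sigma)B} \subseteq \range{V}$, then for the reduced resolvent $\hPhi(\sigma) := (\sigma\hE-\hA)^{-1}$ one has $V\hPhi(\sigma)\hB = \Phi(\sigma)B$; symmetrically, if $\range{\Phi(\mu)^TC^T}\subseteq\range{W}$ then $W\hPhi(\mu)^T\hC^T = \Phi(\mu)^TC^T$. This is proved exactly as in the linear Krylov / Petrov--Galerkin case: write $\Phi(\sigma)B = Vg$ for some $g$, insert $V\hPhi(\sigma)\hE = V\hPhi(\sigma)W^TEV$ acting on $g$, and use $(\sigma E - A)Vg = B$ projected by $W^T$ to identify $g = \hPhi(\sigma)\hB$. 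Once this lemma is in hand, conditions \eqref{eq:linear1} and \eqref{eq:linear2} are immediate: $\hat\bF_L(\lambda_1) = \hC\hPhi(\lambda_1)\hB = (CV)\hPhi(\lambda_1)\hB = C\Phi(\lambda_1)B$ using $\range{\Phi(\lambda_1)B}\subseteq\range{V_L}\subseteq\range{V}$, and dually for $\beta$ using the $W$-side identity and $\hC = CV$.

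Next I would handle the higher kernels $\bF_N^{(\eta)}$ and $\bF_H^{(\xi)}$, which are structurally identical, so it suffices to treat, say, $\bF_H^{(\xi)}$. For \eqref{eq:Qbilinear1}, write out
\[
\hat\bF_H^{(\xi)}(\lambda_1,\ldots,\lambda_{\xi+1}) = \hC\hPhi(\lambda_{\xi+1})\hH_\xi\left(\hPhi(\lambda_\xi)\hB\otimes\cdots\otimes\hPhi(\lambda_1)\hB\right),
\]
substitute $\hH_\xi = W^TH_\xi V^{\circled{\tiny{$\xi$}}}$, and push each reduced resolvent factor through using the $V$-side lemma: $V\hPhi(\lambda_j)\hB = \Phi(\lambda_j)B$ for $j=1,\ldots,\xi$. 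This collapses the argument of $\hH_\xi$ against $V^{\circled{\tiny{$\xi$}}}$ to $\Phi(\lambda_\xi)B\otimes\cdots\otimes\Phi(\lambda_1)B$, leaving $\hC\hPhi(\lambda_{\xi+1})W^TH_\xi(\cdots)$. The remaining step is to absorb the leftmost resolvent: since $\range{\Phi(\lambda_{\xi+1})H_\xi(\Phi(\lambda_\xi)B\otimes\cdots\otimes\Phi(\lambda_1)B)}\subseteq\range{V_H}\subseteq\range{V}$, the vector $H_\xi(\cdots)$ lies in the range of $(\lambda_{\xi+1}E-A)V$, hence $V\hPhi(\lambda_{\xi+1})W^TH_\xi(\cdots) = \Phi(\lambda_{\xi+1})H_\xi(\cdots)$ by the same projected-resolvent argument (now with a general right-hand side in the appropriate Krylov space, not just $B$). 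Combined with $\hC = CV$ this yields exactly $\bF_H^{(\xi)}(\lambda_1,\ldots,\lambda_{\xi+1})$.

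For the "mixed" conditions \eqref{eq:bilinear2} and \eqref{eq:Qbilinear2}, where the last frequency is the output point $\beta$, I would transpose the transfer function so that $\Phi(\beta)^TC^T$ appears on the right. Using the mode-$n$ matricization identity \eqref{eq:matricizationRelation} — specifically the interchange of which tensor mode is contracted against which vector — the scalar $\bF_H^{(\xi)}(\lambda_1,\ldots,\lambda_\xi,\beta)$ can be rewritten with $(H_\xi)_{(2)}$ (the mode-2 matricization) contracted so that the factors become $\Phi(\lambda_\xi)B\otimes\cdots\otimes\Phi(\lambda_2)B\otimes\Phi(\beta)^TC^T$ on the right and $\Phi(\lambda_1)^T$ acting on the left via $W$. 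This is precisely the space $W_H$ was defined to span, so the $W$-side projected-resolvent lemma applies on the left factor, while the $V$-side lemma handles $\hPhi(\lambda_2)\hB,\ldots,\hPhi(\lambda_\xi)\hB$ and $\hC$ absorbs one more. I expect the main obstacle to be bookkeeping the tensor matricization identities correctly: one must verify that $\hH_\xi = W^TH_\xi V^{\circled{\tiny{$\xi$}}}$ at the mode-1 level induces $(\hH_\xi)_{(2)} = W^T (H_\xi)_{(2)}(V^{\circled{\tiny{$\xi$}}})$ at the mode-2 level — a consequence of \eqref{eq:matricizationRelationMatrix} — and that the ordering of Kronecker factors in $V_H$, $W_H$ matches the ordering forced by \eqref{eq:matricizationRelation}. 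Once the matricization relabeling is pinned down, every condition reduces to one or two applications of the projected-resolvent identity, so the genuine content of the proof is that single linear-algebra lemma plus careful index tracking.
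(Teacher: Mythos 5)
Your proposal follows essentially the same route as the paper's proof: the projected-resolvent identities \cref{eq:linearRel}, the absorption of the outermost resolvent into $\range{V}$ (the paper does this by inserting $I_n=\Phi(\lambda_{\eta+1})^{-1}\Phi(\lambda_{\eta+1})$ and using $\hPhi(s)W^T\Phi(s)^{-1}V=I_r$), and the mode-2 matricization flip to handle the mixed conditions \cref{eq:bilinear2,eq:Qbilinear2} on the $W$ side before transposing back via \cref{eq:matricizationRelation}. The one formula you should correct is the induced mode-2 identity: by \cref{eq:matricizationRelationMatrix} it reads $(\hH_\xi)_{(2)} = V^T(H_\xi)_{(2)}\left(V^{\circled{\tiny{$\xi{-}1$}}}\otimes W\right)$ rather than $W^T(H_\xi)_{(2)}V^{\circled{\tiny{$\xi$}}}$ --- the mode-2 factor $V^T$ sits on the left and $W$ occupies the last Kronecker slot, which is precisely what makes the definitions of $W_H$ and $W_N$ line up, exactly as in the paper's relation \cref{eq:NtensorRel}.
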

\begin{proof}
The relations, given in \cref{eq:linear1} and \cref{eq:linear2} follows directly from the linear case, see, e.g., \cite{morAnt05}. Therefore, we omit their proofs for the sake of brevity of the paper. However, for the rest of the proof,  we note down intermediate results, which can be obtained while proving \cref{eq:linear1} and~\cref{eq:linear2}:
\begin{subequations}\label{eq:linearRel}
 \begin{align}
  V\hat\Phi(\lambda_1)\hB 	&= \Phi(\lambda_1)B,  &\lambda_1 &\in \{\sigma_1,\ldots,\sigma_{\tilde r}\} \label{eq:linearRel_V}\\
  W\Phi(\beta)^T\hC 		&= \Phi(\beta)^TC^T,  &\beta &\in \{\mu_1,\ldots,\mu_{\tilde r}\},\label{eq:linearRel_W}
 \end{align}
\end{subequations}
where $\Phi(s) = (sE-A)^{-1}$ and $\hPhi(s) = (s\hE-\hA)^{-1}\hB$. Now, we focus on the relation \cref{eq:bilinear1}. We begin with
\begin{align}
    &V\hPhi(\lambda_{\eta+1})\hN_\eta\left(\hPhi(\lambda_\eta)\hB \otimes \cdots \otimes \hPhi(\lambda_1)\hB \right) \nonumber\\
    &\qquad\qquad = V\hPhi(\lambda_{\eta+1})W^TN_\eta V^{\circled{\tiny{$\eta$}}}\left(\hPhi(\lambda_\eta)\hB \otimes \cdots \otimes \hPhi(\lambda_1)\hB \right)\nonumber\\
    &\pushright{\left(\because \hN_\eta = W^TN_\eta \kronF{V}{$\eta$}\right)}\nonumber\displaybreak[3]\\
    &\qquad\qquad = V\hPhi(\lambda_{\eta+1})W^TN_\eta \left( V \hPhi(\lambda_\eta)\hB \otimes \cdots \otimes V\hPhi(\lambda_1)\hB \right)\nonumber\\
    &\qquad\qquad = V\hPhi(\lambda_{\eta+1})W^TN_\eta \left(  \Phi(\lambda_\eta)B \otimes \cdots \otimes \Phi(\lambda_1)B \right)\nonumber\\
    &\pushright{\left(\text{using}~ \cref{eq:linearRel_V}\right)}\nonumber\\
    &\qquad\qquad = V\hPhi(\lambda_{\eta+1})W^T \Phi(\lambda_{\eta+1})^{-1} \underbrace{\Phi(\lambda_{\eta+1}) N_\eta \left(  \Phi(\lambda_\eta)B \otimes \cdots \otimes \Phi(\lambda_1)B \right)}_{\in V}\nonumber\\
    &\pushright{\left(\text{introduction of } I_n = \Phi(\lambda_{\eta+1})^{-1} \Phi(\lambda_{\eta+1}) \right)}\nonumber\\
    &\qquad\qquad = V\hPhi(\lambda_{\eta+1})W^T \Phi(\lambda_{\eta+1})^{-1} Vz,\label{eq:VPhi}
\end{align}
where the vector $z$ is such that $Vz = \Phi(\lambda_{\eta+1}) N_\eta \left(  \Phi(\lambda_\eta)B \otimes \cdots \otimes \Phi(\lambda_1)B \right)$. Additionally, we have
\begin{align*}
\hPhi(s) W^T\Phi(s)^{-1}V &=    (s\hE-\hA)^{-1} W^T(sE-A)V \\ &= (s\hE-\hA)^{-1} (sW^TEV-W^TAV) = I_r.
\end{align*}
Substituting the above relation in \cref{eq:VPhi} and pre-multiplying with  $C$ yields the relation \cref{eq:bilinear1}. Similarly, we can prove the relation \cref{eq:Qbilinear1}. Next, we focus on the relation \cref{eq:bilinear2}. We know that
\begin{equation*}
 \hN_\eta= W^T N_\eta\left. \kronF{V}{$\eta$}\right..
\end{equation*} 
Hence, using \cref{eq:matricizationRelationMatrix}, we obtain
\begin{equation}\label{eq:NtensorRel}
 \left(\hN_\eta\right)_{(2)} = V^T\left(N_\eta\right)_{(2)}\left( \kronF{V}{$\eta{-}1$}\otimes W\right),
\end{equation}
where $\left(\hN_\eta\right)_{(2)}$ is the mode-2 matricization of the tensor $\bten{\hN}_\eta$ whose mode-1 matricization is $\hN_\eta$. With the relation \cref{eq:NtensorRel}, we now consider
\begin{subequations}\allowdisplaybreaks
\begin{align*}
 &W\hPhi(\lambda_{1})^T \left(\hN_\eta \right)_{(2)}\left(\hPhi(\lambda_\eta)\hB \otimes \cdots \otimes \Phi(\lambda_2)\hB \otimes\hPhi(\beta)^T\hC^T  \right)\\
 &\quad =W\hPhi(\lambda_{1})^T V^T\left(N_\eta\right)_{(2)}\left( \kronF{V}{$\eta{-}1$}\otimes W\right)\Big(\hPhi(\lambda_\eta)\hB \otimes \cdots \\ 
 & \pushright{\otimes \Phi(\lambda_2)\hB \otimes\hPhi(\beta)^T\hC^T  \Big) }\\
 & \pushright{\left(\text{using}~\cref{eq:NtensorRel}\right)}\\
 & \quad= W\hPhi(\lambda_{1})^T V^T\left(N_\eta\right)_{(2)}\left( V\hPhi(\lambda_\eta)\hB \otimes \cdots \otimes V\hPhi(\lambda_2)\hB \otimes W \hPhi(\beta)^T\hC^T \right) \\ 
 & \quad= W\hPhi(\lambda_{1})^T V^T\left(N_\eta\right)_{(2)}\left(s \Phi(\lambda_\eta)B \otimes \cdots \otimes \Phi(\lambda_2)B \otimes  \Phi(\beta)^TC^T \right) \\
 & \pushright{\left(\text{using}~\cref{eq:linearRel}\right)}\\
 & \quad= W\hPhi(\lambda_{1})^T V^T\Phi(\lambda_{1})^{-T} \\
 &\qquad\qquad\qquad \times \underbrace{\Phi(\lambda_{1})^T \left(N_\eta\right)_{(2)}\left( \Phi(\lambda_\eta)B \otimes \cdots \otimes \Phi(\lambda_2)B \otimes  \Phi(\beta)^TC^T \right)}_{\in W \left(\therefore ~=: Wq\right)} \\
 &\quad= W\hPhi(\lambda_{1})^T V^T\Phi(\lambda_{1})^{-T} Wq  = Wq\\
 &\quad =\Phi(\lambda_{1})^T \left(N_\eta\right)_{(2)}\left( \Phi(\lambda_\eta)B \otimes \cdots \otimes \Phi(\lambda_2)B \otimes  \Phi(\beta)^TC^T \right).
\end{align*}
\end{subequations}
Next, we multiply both sides by $B^T$ to get
\begin{multline*}
 \hB^T\hPhi(\lambda_{1})^T \left(\hN_\eta \right)_{(2)}\left(\hPhi(\lambda_\eta)\hB \otimes \cdots \otimes \Phi(\lambda_2)\hB \otimes\hPhi(\beta)^T\hC^T  \right) \\
   = B\Phi(\lambda_{1})^T \left(N_\eta\right)_{(2)}\left( \Phi(\lambda_\eta)B \otimes \cdots \otimes \Phi(\lambda_2)B \otimes  \Phi(\beta)^TC^T \right).
\end{multline*}
Using the matricization property of tensor-vector multiplications~\cref{eq:matricizationRelation}, we get 
\begin{equation*}
 \hC\hPhi(\beta) \hN_\eta\left(\hPhi(\lambda_\eta)\hB \otimes \cdots \otimes \hPhi(\lambda_1)\hB \right) 
  = C\Phi(\beta) N_\eta\left(s\Phi(\lambda_\eta)B \otimes \cdots \otimes \Phi(\lambda_1)B \right),
\end{equation*}
which is nothing but the relation given in \cref{eq:bilinear2}. Using similar steps, we can prove \cref{eq:Qbilinear2}; thus, for the sake of brevity, we skip it. This concludes the proof. 
\end{proof}
\subsection{Tangential-interpolating ROMs for MIMO systems}
In this subsection, we discuss a construction of an interpolating reduced-order systems for MIMO polynomial systems. Similar to the SISO case, the leading generalized transfer functions for a MIMO polynomial system are given
as follows:
\begin{subequations}\label{eq:general_TF_MIMO}
\begin{align}
 \bF_L(s_1) & =  C\Phi(s_1)B,\\
 \bF^{(\xi)}_H(s_1,\ldots,s_{\xi+1}) &=  C\Phi(s_{\xi+1}) H_\xi \left(\Phi(s_\xi)B \otimes \cdots \otimes \Phi(s_{1})B\right),\\
 \bF_N^{(\eta)}(s_1,\ldots, s_{\eta+1}) & =  C \Phi(s_{\eta+1}) N_\eta\left(I_m\otimes\Phi(s_{\eta})B \otimes \cdots \otimes \Phi(s_{1})B \right),
\end{align}
\end{subequations}
where $\Phi(s) = (sI_n{-}A)^{-1}$. In \Cref{thm:gen_interpolation}, we have provided a general interpolation framework for SISO polynomial systems, which can be extended to the MIMO case. However, a straightforward extension of the interpolation idea for the MIMO case might lead to projection matrices $V$ and $W$ with an unmanageable number of columns for the MIMO case.  Therefore, we make use of the tangential interpolation concept from the linear case for MIMO systems \cite{morGalVV04}. Furthermore, for ease of practical implementation, we avoid vectors in the projection matrices $V$ and $W$ corresponding to  cross frequencies. This means that we set $\lambda_1 = \lambda_2 = \cdots = \lambda_{\{\eta,\xi\}} = \beta$.    As a result, we propose the following lemma that is arguably of more importance from a practical point of view. 
\begin{lemma}\label{lemma:singleInterpolation}
	Consider the original system as given in \eqref{eq:poly_sys}. Let $\sigma_i \in \C$, $i \in \{1,\ldots,\tilde r\}$, be interpolation points such that $sE-A$ is invertible for all $s \in \{\sigma_1,\ldots, \sigma_{\tilde r}\}$, and $b_i \in \C^m$ and $c_i\in\C^q$ $i \in \{1,\ldots,{\tilde r}\}$ be right and left tangential directions corresponding to $\sigma_i$, respectively. Let $V$ and $W$ be defined as follows: 
\begin{subequations}\allowdisplaybreaks
	\begin{align*}
	V_{L}	&= 	\bigcup_{i= 1}^{\tr} \range{\Phi(\sigma_i)Bb_{i}},\\
	V_{N} 	&= 	\bigcup_{\eta= 1}^d\bigcup_{i= 1}^{\tr} \range{ \Phi(\sigma_i) N_\eta\left(I_m\otimes \Phi(\sigma_i)Bb_i \otimes \cdots \otimes \Phi(\sigma_i)Bb_i \right)},  \\
	V_H 	&=  	\bigcup_{\xi= 2}^d\bigcup_{i= 1}^{\tr}  \range{ \Phi(\sigma_i) H_\xi \left(\Phi(\sigma_i)Bb_i \otimes \cdots \otimes \Phi(\sigma_i)Bb_i \right) },\\ 
	W_L	&= 	\bigcup_{i= 1}^{\tr}\range{\Phi(\sigma_i)^TC^Tc_i},\\
	W_{N} 	&= 	\bigcup_{\eta= 1}^d \bigcup_{i= 1}^{\tr}\range{ \Phi(\sigma_{i}) \left(N_\eta \right)_{(2)}\left(I_m\otimes\Phi(\sigma_i)B \otimes \cdots \otimes \Phi(\sigma_i)B \otimes\Phi(\sigma_i)^TC^T \right)}, \\
	W_H 	&=  	\bigcup_{\xi= 2}^d \bigcup_{i= 1}^{\tr} \range{ \Phi(\sigma_{i}) \left(H_\xi\right)_{(2)} \left(\Phi(\sigma_{i})B \otimes \cdots \otimes \Phi(\sigma_{i})B\otimes \Phi(\beta)^TC^T \right) },\\
	V 	&= 	\range{V_L, V_N, V_H},\\
	W	&= 	\range{W_L, W_N, W_H}.
	\end{align*}
\end{subequations}
If a reduced-order system is computed as shown in \eqref{eq:compute_rom} using the projection matrices $V$ and $W$, where we assume $V$ and $W$ to be of full rank, then the following interpolation conditions are fulfilled:
	\begin{subequations}\allowdisplaybreaks
		\begin{align}
		\bF_L(\sigma_i) b_i	&= 	\hat{\bF}_L(\sigma_i)b_i, \label{eq:1}\\
		c_i^T\bF_L(\sigma_i)	&= 	c_i^T\hat{\bF}_L(\sigma_i),\label{eq:2} \\
		\dfrac{d}{ds_1} c_i^T\bF_L(\sigma_i)b_i
					&= 	\dfrac{d}{ds_1} c_i^T\hat{\bF}_L(\sigma_i)b_i,  \label{eq:3}\\
		\bF_N^{(\eta)}(\sigma_i,\ldots,\sigma_i)\left(I_m\otimes b_i^{\circled{\tiny {$\eta$}}}\right) 
					&= 	\hat F_N^{(\eta)}(\sigma_i,\ldots,\sigma_i) \left(I_m\otimes b_i^{\circled{\tiny {$\eta$}}}\right), \label{eq:4}\\
		c_i^T\bF_N^{(\eta)}(\sigma_i,\ldots,\sigma_i)\left( I_m^{\circled{\tiny {$2$}}} \otimes  b_i^{\circled{\tiny {$\eta{-}1$}}}\right) 
					&= 	c_i^T\hat{\bF}_N^{(\eta)}(\sigma_i,\ldots,\sigma_i)\left( I_m^{\circled{\tiny {$2$}}} \otimes  b_i^{\circled{\tiny {$\eta{-}1$}}}\right)\label{eq:5}\\
		\dfrac{d}{ds_j}c_i^T\bF_N^{(\eta)}(\sigma_i,\ldots,\sigma_i)\left(I_m\otimes b_i^{\circled{\tiny {$\eta$}}}\right) 
					&= 	\dfrac{d}{ds_j}  c_i^T \hat F_N^{(\eta)}(\sigma_i,\ldots,\sigma_i) \left(I_m\otimes b_i^{\circled{\tiny {$\eta$}}}\right), \label{eq:6}\\
		\bF_H^{(\xi)}(\sigma_i,\ldots,\sigma_i) b_i^{\circled{\tiny {$\xi$}}} 
					&= 	\hat{\bF}_H^{(\xi)}(\sigma_i,\ldots,\sigma_i)  b_i^{\circled{\tiny {$\xi$}}}, \label{eq:7}\\
		c_i^T\bF_H^{(\xi)}(\sigma_i,\ldots,\sigma_i)\left( I_m \otimes  b_i^{\circled{\tiny {$\xi{-}1$}}}\right) 
					&= 	c_i^T\hat{\bF}_H^{(\xi)}(\sigma_i,\ldots,\sigma_i)\left( I_m \otimes  b_i^{\circled{\tiny {$\xi{-}1$}}}\right),\label{eq:8}\\
		\dfrac{d}{ds_j}c_i^T\bF_H^{(\xi)}(\sigma_i,\ldots,\sigma_i) b_i^{\circled{\tiny {$\xi$}}} 
					&= 	\dfrac{d}{ds_j}  c_i^T\hat{\bF}_H^{(\xi)}(\sigma_i,\ldots,\sigma_i) b_i^{\circled{\tiny {$\xi$}}}\label{eq:9}
		\end{align}
	\end{subequations}
	where $i \in \{1,\ldots, \tilde r\}$, $\xi \in \{2,\ldots,d\} $, $\eta \in \{1,\ldots,d\}$ and $\dfrac{d}{ds_j}$ denotes the partial derivative with respect to $s_j$ of a given function. 
\end{lemma}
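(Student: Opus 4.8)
The plan is to follow the proof of \Cref{thm:gen_interpolation} almost line by line, specialised to the situation in which a single shift is attached to all the resolvent factors belonging to one interpolation point (so that $\lambda_1=\cdots=\beta=\sigma_i$) and decorated with the tangential directions $b_i$ and $c_i$. I would split the nine assertions into three groups. The two workhorses, both established inside the proof of \Cref{thm:gen_interpolation}, are the tangential analogues of \eqref{eq:linearRel},
\[
  V\hPhi(\sigma_i)\hB b_i = \Phi(\sigma_i)Bb_i , \qquad W\hPhi(\sigma_i)^T\hC^Tc_i = \Phi(\sigma_i)^TC^Tc_i ,
\]
which hold because $\Phi(\sigma_i)Bb_i\in\range{V}$ and $\Phi(\sigma_i)^TC^Tc_i\in\range{W}$ by construction, together with the identity $\hPhi(s)W^T\Phi(s)^{-1}V=I_r$, where $\hPhi(s)=(s\hE-\hA)^{-1}$.

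The first group, \eqref{eq:1}--\eqref{eq:3}, involves only the linear kernel $\bF_L(s)=C\Phi(s)B$ and consists of the standard right-tangential, left-tangential and bitangential Hermite interpolation conditions of rational Krylov methods; since the generating vectors $\Phi(\sigma_i)Bb_i$ and $\Phi(\sigma_i)^TC^Tc_i$ lie in $\range{V}$ and $\range{W}$, these follow from the classical linear theory, e.g.\ \cite{morGalVV04}. The second group, \eqref{eq:4}, \eqref{eq:5}, \eqref{eq:7} and \eqref{eq:8}, collects the primary interpolation conditions for $\bF_N^{(\eta)}$ and $\bF_H^{(\xi)}$. For \eqref{eq:4} and \eqref{eq:7} I would repeat verbatim the computation behind \eqref{eq:bilinear1}: insert the reduced nonlinear matrices of \eqref{eq:compute_rom} (in the MIMO form, $\hN_\eta=W^TN_\eta(I_m\otimes\kronF{V}{$\eta$})$), move $V$ through the Kronecker factors, replace $V\hPhi(\sigma_i)\hB b_i$ by $\Phi(\sigma_i)Bb_i$ via the first workhorse, insert $I_n=\Phi(\sigma_i)^{-1}\Phi(\sigma_i)$, use that $\Phi(\sigma_i)N_\eta(I_m\otimes\Phi(\sigma_i)Bb_i\otimes\cdots\otimes\Phi(\sigma_i)Bb_i)\in\range{V}$ (this vector is exactly a generator of $V_N$, resp.\ $V_H$) together with $\hPhi(s)W^T\Phi(s)^{-1}V=I_r$, and finally left-multiply by $C$ (by $c_i^TC$ in the left-tangential variants). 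For the two-sided conditions \eqref{eq:5} and \eqref{eq:8} I would reproduce the mode-$2$ matricisation argument behind \eqref{eq:NtensorRel}: the reduction formula for $\hN_\eta$ yields, through \eqref{eq:matricizationRelationMatrix}, a relation of the form $(\hN_\eta)_{(2)}=V^T(N_\eta)_{(2)}(I_m\otimes\kronF{V}{$\eta{-}1$}\otimes W)$; then, exactly as in \Cref{thm:gen_interpolation}, one invokes $\Phi(\sigma_i)^T(N_\eta)_{(2)}(I_m\otimes\Phi(\sigma_i)B\otimes\cdots\otimes\Phi(\sigma_i)^TC^Tc_i)\in\range{W}$ (a generator of $W_N$, resp.\ $W_H$) and passes back to transfer-function form with the tensor--vector matricisation identity \eqref{eq:matricizationRelation}. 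The only extra feature relative to \Cref{thm:gen_interpolation} is that the leading $I_m$ block of the MIMO kernels \eqref{eq:general_TF_MIMO} is simply carried along, untouched, through all of these Kronecker manipulations.

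The third group, the Hermite conditions \eqref{eq:3}, \eqref{eq:6} and \eqref{eq:9}, is where the argument genuinely leaves the template of \Cref{thm:gen_interpolation}, and I expect it to be the main obstacle. The key point is that the \emph{same} shift $\sigma_i$ is used on the $V$- and on the $W$-side, and that the kernels are evaluated on the diagonal $(\sigma_i,\ldots,\sigma_i)$ with every right direction equal to $b_i$; by the resulting symmetry the partial derivative with respect to an arbitrary $s_j$ reduces to a derivative with respect either to the frequency attached to $C$ or to a single ``$\Phi(s)B$'' slot, and freezing the remaining variables turns the kernel into a univariate rational function $s\mapsto M_1\Phi(s)M_2$ (or its transpose). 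In the first case one has $\Phi(\sigma_i)M_2\in\range{V}$ (a generator of $V_N$, resp.\ $V_H$) and $\Phi(\sigma_i)^TC^Tc_i\in\range{W}$ ($W_L$); in the second case $\Phi(\sigma_i)Bb_i\in\range{V}$ ($V_L$) and $M_1^Tc_i\in\range{W}$, the last inclusion being exactly what the mode-$2$ generators of $W_N$, $W_H$ are designed to provide (again via \eqref{eq:matricizationRelation}). Either way one is in the situation of the classical fact that a two-sided projection interpolating a rational function at a point also matches its first derivative there; combining this with the resolvent identity $\frac{d}{ds}\Phi(s)=-\Phi(s)E\Phi(s)$ and contracting with the remaining tangential directions delivers \eqref{eq:6} and \eqref{eq:9}, while \eqref{eq:3} is the degenerate case in which no nonlinear slot is present. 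The delicate part will be to make the reduction of $\frac{d}{ds_j}$ to a single canonical slot rigorous and, above all, to identify $M_1^Tc_i$ (resp.\ $\Phi(\sigma_i)M_2$) correctly with the column span of $W_N$, $W_H$ (resp.\ $V_N$, $V_H$) --- that is, to keep precise track of which tensor mode is matricised and where the $I_m$ blocks sit --- the very bookkeeping that already makes \eqref{eq:5} and \eqref{eq:8} the subtle points of the second group.
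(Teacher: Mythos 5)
Your plan coincides with the paper's own proof, which consists of exactly the observation you make: conditions \eqref{eq:1}, \eqref{eq:2}, \eqref{eq:4}, \eqref{eq:5}, \eqref{eq:7}, \eqref{eq:8} are obtained by rerunning the proof of \Cref{thm:gen_interpolation} with all shifts collapsed to $\sigma_i$ and contracted with the tangential directions, while the Hermite conditions are dismissed there with ``very similar steps and simple algebra.'' Your elaboration of the derivative conditions via the classical two-sided, same-point argument (and your flagging of the mode-bookkeeping for general $s_j$ as the delicate point) is in fact more detailed than what the paper records.
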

\begin{proof}
 The proof of \cref{eq:1,eq:2,eq:4,eq:5,eq:7,eq:8} exactly follows the proof of \cref{thm:gen_interpolation}. Using very similar steps and simple algebra, one can easily prove the rest of the conditions. 
\end{proof}
\subsection{Connection to the Loewner Approach}
In recent years, Loewner-based MOR has received a lot of attention. For linear systems, the authors in \cite{mayo2007framework} have discussed the Loewner approach to construct reduced-order systems using transfer function data.  Later on, the Loewner approach has been extended to other classes of nonlinear systems, namely bilinear and QB systems in \cite{antoulas2016model,gosea2018data}, where data related to generalized transfer functions is required to obtain a reduced-order system. 

An important ingredient in the Loewner approach is the construction of the Loewner matrix ($\mathbb L$) and the shifted Loewner matrix ($\mathbb L_s$). One way to construct the matrices $\LL$ and $\LL_s$ is either by using an experimental set-up or by using numerical evaluations of the generalized transfer functions, which is the primary inspiration of the method. However, there is a strong  connection with interpolation of (generalized-) transfer functions, corresponding to a given system. As a result, we, alternatively, can  construct the latter matrices by projection for a given realization of a system, ensuring the interpolation conditions. 

For an example, let us consider $4$ frequency measurements $H(\sigma_1), H(\sigma_2)$, $H(\mu_1)$ and $H(\mu_2)$, where $H(s) :=  C(sE-A)^{-1}B \in \C$ is the transfer function of a linear SISO system with the system matrices $(E,A,B,C)$. As shown e.g., in \cite{antoulas2014tutorial}, the matrices $\LL$ and $\LL_s$, using the data points and letting $\sigma_{\{1,2\}}$ and $\mu_{\{1,2\}}$ to be the right and left interpolation points, can be constructed as follows:
\begin{equation}\label{eq:loewnerLinear}
  \LL{(i,j)} = \dfrac{H(\mu_i) - H(\sigma_j)}{\mu_i - \sigma_j},\qquad  \LL_s{(i,j)} = \dfrac{\mu_iH(\mu_i) - \sigma_jH(\sigma_j)}{\mu_i - \sigma_j}, 
\end{equation}
where ${i,j} \in \{1,2\}$. Moreover, if the matrices $V$ and $W$ are given as in \Cref{thm:gen_interpolation}, i.e.,
\begin{align*}
 V &= \begin{bmatrix} (\sigma_1 E-A)^{-1}B,& (\sigma_2 E-A)^{-1}B\end{bmatrix},\\
 W &= \begin{bmatrix} (\mu_1 E-A)^{-T}C^T,& (\mu_2 E-A)^{-T}C^T\end{bmatrix},
\end{align*}
then the matrices $\LL$ and $\LL_s$, shown in \cref{eq:loewnerLinear}, can also be constructed as 
\begin{equation}\label{eq:loewnerLinearProj}
 \LL = -W^TEV, \quad \LL_s = -W^TAV.
\end{equation}

A similar analogy can also be seen for bilinear and QB systems \cite{antoulas2016model,gosea2018data}. It is preferable to construct $\LL$ and $\LL_s$ using the data if the data corresponding to the transfer function can be either computed cheaply by its explicit expression or determined by an experimental setup. However, in the case nonlinear systems, it is not straightforward to determine the generalized transfer function by an experiment, which is mainly due to not having a clear interpretation of generalized transfer functions of nonlinear systems as in the case of linear systems. Thus, the method to determine $\LL$ and $\LL_s$ by projection shown in \cref{eq:loewnerLinearProj}, can be of greater use when measurement data is not available but instead, we have a system realization. 

In this paper, we assume that a realization of the polynomial systems \eqref{eq:poly_sys} is given and thus focus on constructing the matrices  $\LL$ and $\LL_s$ using projection \eqref{eq:loewnerLinearProj}, and the rest of the system matrices using the same projection matrices $V$ and $W$ are given as follows:
\begin{align*}
 \BB &= W^TB, &\HH_\xi &= W^TH_\xi V^{\circled{\tiny{$\xi$}}}, && \xi \in \{2,\ldots,d\},\\
 \CC &= CV, & \NN_\eta &= W^TN_\eta \left(I_m\otimes V^{\circled{\tiny{$\eta$}}}\right), &&\eta \in \{1,\ldots,d\}.
\end{align*}

By \Cref{thm:gen_interpolation}, it is clear that the systems $\cS_1:\left(E,A, H_\xi, N_\eta, B,C\right)$ and $\cS_2:\left(\LL,\LL_s, \HH_\xi, \NN_\eta, \BB,\CC\right)$ are interpolating at the considered frequency points. However, $\cS_2$ can be singular, meaning that it may contain a lot of redundant information which can be compressed. Thus, inspired by the Loewner approach for linear, bilinear, and QB systems, we  remove the redundancy by compressing the information using an SVD of the following matrices, composed of $\LL$ and $\LL_s$:
\begin{align}
 \bbm \LL & \LL_s \ebm &= Y_1\Sigma_1 X^T_1,\\
 \bbm \LL~\\ \LL_s \ebm &= Y_2\Sigma_2 X^T_2,
\end{align}
where the diagonal entries of $\Sigma_1$ and $\Sigma_2$ are in non-increasing order. Based on the first $r$ columns of $Y_1$ and $X_2$, denoted by $Y_r$ and $X_r$, we can determine a compressed $\hat{\cS}_2$, compressing the information of $\cS_2$, as follows:
\begin{align*}
\hE &= Y_r^T \LL X_r,  & \hA &= Y_r^T\LL_s X_r^T,& \hH_\xi &= Y_r^T \HH_\xi X_r^{\circled{\tiny{$\xi$}}}, \qquad\qquad\quad \xi \in \{2,\ldots,d\},\\
  \hB &= Y_r^T \BB, & \hC &= \CC X_r,  &\hN_\eta &= Y_r^T\NN_\eta \left(I_m\otimes X_r^{\circled{\tiny{$\eta$}}}\right),\quad \eta \in \{1,\ldots,d\}.
\end{align*}

There are essentially two steps involved in order to get $\hat{\cS}_2$. In the first step, we require matrices such as $\HH_\xi$ and $\NN_\eta$, which are generally dense, hence unmanageable. This is followed by compressing these matrices by using $X_r$ and $Y_r$. However, upon closer inspection, we can determine $\hat{\cS}_2$ without completely forming $\cS_2$, or matrices $\HH_\xi$ and $\NN_\eta$, but we can rather determine $\hat{\cS}_2$ by directly projecting the original system matrices using appropriate projection matrices. If we define the \emph{effective} projection matrices as follows:
\begin{equation}
 \Ve := VX_r, \quad\text{and} \quad\We := WY_r,
\end{equation}
then $\hat{\cS}_2$ can be determined in a traditional projection framework of the original system~\eqref{eq:poly_sys} as follows:
\begin{equation}\label{eq:deter_red_direct}
\begin{aligned}
\hE &= \We^TE\Ve,  & \hA &= \We^TA\Ve,& \hH_\xi &= \We H_\xi\Ve^{\circled{\tiny{$\xi$}}},\\
  \hB &= \We^T B, & \hC &= C \Ve,  &\hN_\eta &= \Ve^T N_\eta \left(I_m\otimes\Ve^{\circled{\tiny{$\eta$}}}\right),\end{aligned}
  \end{equation}
where $\xi\in \{2,\ldots,d\}$ and $\eta \in \{1,\ldots,d\}$. 
We point out that it is advantageous to determine  reduced system matrices, or the system $\hat{\cS}_2$ as shown in \eqref{eq:deter_red_direct}; this way, we are not required to form large dense matrices such as $\HH_\xi$ and $\NN_\eta$. We can rather compute reduced matrices by  multiplying efficiently the sparse and super-sparse\footnote{super-sparsity of a matrix is defined as a ratio of the number of non-zero distinct numbers  to the total number of non-zero elements.} original matrices $H_\xi$ and $N_\eta$ with $\Ve$ and $\We$. Having all these results, we briefly sketch the steps to determine reduced-order systems in \Cref{algo:nonParaPoly}. However, an important computational aspect related to tensor computations such as $\We H_\xi\Ve^{\circled{\tiny{$\xi$}}}$ still remains, which is discussed in the next section.

\begin{algorithm}[!tb]
	\caption{MOR for Non-Parametric Polynomial Systems (\texttt{LbNPS-Algo}).}
	\label{algo:nonParaPoly}
	\begin{algorithmic}[1]
		\Statex {\bf Input:} The system matrices $E,A,H_\xi,N_\eta,B,C$, $\xi \in \{2,\ldots,d\}$, $\eta \in \{1,\ldots,d\}$ and a set of interpolation points $\sigma_i$ and corresponding tangential directions $b_i$ and $c_i$, the reduced order $r$.
		\Statex {\bf Output:}  The reduced system matrices $\hE,\hA,\hH_\xi,\hN_\eta,\hB,\hC$, $\xi \in \{2,\ldots,d\}$, $\eta \in \{1,\ldots,d\}$.
		\State Determine $V$ and $W$ as shown in \Cref{lemma:singleInterpolation}.
		\State Define Loewner and shifted Loewner matrices as follows:
		$$
		\LL = -W^TE V,\qquad 
		\LL_s = -W^TA V, 
		$$
		\State Compute SVD of the matrices:
		\begin{align*}
		\bbm \LL,  \LL_s \ebm &= Y_1\Sigma_1 X^T_1,\\
		\bbm \LL~ \\  \LL_s \ebm &= Y_2\Sigma_2 X^T_2.
		\end{align*}
		\State Define $Y_r := Y_1(:,1\mathbin{:}r)$ and $X_r := X_2(:,1\mathbin{:}r)$. 
		\State Determine compact projection matrices:
		\Statex\quad $\Ve := \texttt{orth}\left(VX_r\right) $ and $\We := \texttt{orth}\left(WY_r\right)$.
		\State Determine the reduced-order system as follows:
		\Statex $\begin{aligned}
\hE &= \We^TE\Ve,  & \hA &= \We^TA\Ve,& \hH_\xi &= \We H_\xi\Ve^{\circled{\tiny{$\xi$}}},\quad &&\xi\in \{2,\ldots,d\},\\
  \hB &= \We^T B, & \hC &= C \Ve,  &\hN_\eta &= \Ve^T N_\eta \left(I_m\otimes\Ve^{\circled{\tiny{$\eta$}}}\right), \quad &&\eta \in \{1,\ldots,d\}.
\end{aligned}$
	\end{algorithmic}
\end{algorithm}

\section{Computational Aspects and Application of CUR}\label{sec:computationalCUR}
In this section, we discuss two important computational aspects which are related to evaluating the nonlinear terms of the reduced-order systems \eqref{eq:deter_red_direct} and the use of the CUR matrix approximation in order to accelerate simulations of the reduced-order systems.
\subsection{Efficient evaluation the nonlinear terms of the ROMs}
Let us begin with the computational effort related to evaluating, e.g., $\hat H_\xi := \We^T H_\xi\Ve^{\circled{\tiny{$\xi$}}}$. It can be noticed that a direct computation of the above terms requires the computation of $\Ve^{\circled{\tiny{$\xi$}}}$. Generally, the matrix $\Ve$ is a dense matrix; thus, the computation related to $\Ve^{\circled{\tiny{$\xi$}}}$ is of complexity $\mathcal O((n{\cdot} r)^\xi)$, which easily becomes an unmanageable task. For $\xi = 2$, the authors in \cite{morBenB15} have proposed a method using tensor algebra to compute $\hat H_2$ without explicitly forming $\Ve\otimes \Ve$. On the other hand, the authors in \cite{morBenGG18} have aimed at exploiting the structure of the nonlinear operators, typically arising in PDEs/ODEs, thus also leading to an efficient method to compute $\hat H_2$. 

In this paper, we focus on the latter approach, where the explicit nonlinear operator of the PDEs is utilized, to compute $\hH_\xi$. Extending the discussion in \cite{morBenGG18}, in principle, we can write the term $H_\xi x^{\circled{\tiny{$\xi$}}}$ in the system \eqref{eq:poly_sys} in the Hadamard product form as follows:
\begin{equation}\label{eq:nonlinearTerm}
 H_\xi  x^{\circled{\tiny{$\xi$}}} = \cA_1 x \circ \cdots \circ \cA_\xi x,
\end{equation}
where $\circ$ denotes the Hadamard product and $\cA_i \in \Rnn$ are the constant matrices depending on the nonlinear operator in a governing equation. In order to reduce these nonlinear terms, resulting in a reduced-order system, we proceed as follows. Firstly, we substitute $x(t) \approx \Ve \hat x(t)$, where $x(t) \in \Rn$ and $\hx(t) \in \Rr$ are the original and reduced  state vectors, respectively, and then multiply $\We^T$ from the left-hand side, thus leading to the corresponding  nonlinear term:
\begin{equation*}
 \hH_\xi  \hx^{\circled{\tiny{$\xi$}}} =  \We^T\left( \left(\hat{\cA}_{1} \hx\right) \circ \cdots \circ \left(\hat{\cA}_{\xi} \hx\right)\right),
\end{equation*}
where $\hat{\cA}_{i} = \cA_i\Ve$, $i \in \{1,\ldots,\xi\}$. Next, we use the relation between a Hadamard product and the Kronecker product, that is 
\begin{equation*}
 \cP p \circ \cQ q = \begin{bmatrix}  \cP(1,:)\otimes \cQ(1,:) \\ \vdots \\  \cP(n,:)\otimes \cQ(n,:) \end{bmatrix} (p\otimes q).
\end{equation*}
Thus, we get 
\begin{align}\label{eq:reduced_H}
 \We^T\left(\left(\hat{\cA}_{1} \hx\right) \circ \cdots \circ \left(\hat{\cA}_{\xi} \hx\right)\right) = \We^T\underbrace{\begin{bmatrix}\hat{\cA}_{1}(1,:)  \otimes \cdots \otimes \hat{\cA}_{\xi}(1,:) \\ \vdots \\ \hat{\cA}_{1}(n,:)  \otimes \cdots \otimes \hat{\cA}_{\xi}(n,:)\end{bmatrix}}_{=:\hat{\cA}}\hx^{\circled{\tiny{$\xi$}}}.
\end{align}
It can be seen that $\We^T\hat{\cA} =  \hat H_\xi$. Summarizing, we can perform computations related to $ \hat H_\xi$ efficiently by utilizing the particular structure of the nonlinear terms in PDEs/ODEs, without  explicitly  forming $\Ve^{\circled{\tiny{$\xi$}}}$. We illustrate the procedure for a typical nonlinear PDE term in \Cref{subsec:ComputationalIllustration}.

\subsection{CUR matrix approximation and ROMs}
Next, we discuss another computational issue, due to which we may not achieve the desired reduction in the simulation time even after reducing the original system~\eqref{eq:poly_sys}. Explaining  this issue further, the reduced matrices such as $\hH_\xi \in \R^{r\times r^{\xi}}$ are generally dense matrices which are multiplied with $\hx^{\circled{\tiny{$\xi$}}}$. Thus, the computation $\hH_\xi \hx^{\circled{\tiny{$\xi$}}}$ is in $\cO(r^{2\xi+1})$, which increases rapidly with the order of the reduced system or polynomial system \eqref{eq:poly_sys}. As a remedy, in this paper, we propose a new procedure  to approximate $\hH_\xi \hx^{\circled{\tiny{$\xi$}}}$, which can be computed cheaply. For this, we make use of the \emph{CUR matrix approximation}, see, e.g.~\cite{mahoney2009cur,sorensen2016deim,wang2013improving}. Using this, we can approximate  the matrix $\hat{\cA}$, defined in \eqref{eq:reduced_H}, as follows: 
\begin{equation}\label{eq:hA_CUR}
 \hat{\cA} \approx \cC \cU \cR,
\end{equation}
where $\cC \in \R^{n \times n_c}$ and $\cR \in \R^{n_r\times r^l}$ contain wisely chosen $n_c$ columns and $n_r$ rows of the matrix $\hat{\cA}$, respectively, and $\cU \in \R^{n_c\times n_r}$ is determined such that it minimizes $\|\hat{\cA} - \cC\cU\cR\|$ in an appropriate norm. There has been a significant research how to choose  columns and rows appropriately, leading to a good or even optimal in some sense, approximation of a matrix. We refer the reader to \cite{mahoney2009cur,sorensen2016deim,wang2013improving} and references therein for more details. Substituting the relation \cref{eq:hA_CUR}  in \cref{eq:reduced_H} results in
\begin{align}
 \We^T\hat{\cA} \hx^{\circled{\tiny{$\xi$}}}  \approx \We^T \cC \cU \cR\hx^{\circled{\tiny{$\xi$}}}.
\end{align}
Next, we closely look at the term $\cR\hx^{\circled{\tiny{$\xi$}}}$, whose columns are given as 
\begin{equation}
 \left(\hat{\cA}_{1}(i_r,:)  \otimes \cdots \otimes \hat{\cA}_{\xi}(i_r,:) \right)\hx^{\circled{\tiny{$\xi$}}},
\end{equation}
where $i_r$ belongs to the columns chosen by the CUR matrix approximation. We know that $\hat{\cA}_{1}(i_r,:) = {\cA}_{1}(i_r,:)V$. Substituting this relation and $x \approx V\hat x$, we get 
\begin{align*}
 &\hat{\cA}_{1}(i_r,:)  \otimes \cdots \otimes \hat{\cA}_{\xi}(i_r,:) \hx^{\circled{\tiny{$\xi$}}} \\
 &\hspace{3cm} = \left({\cA}_{1}(i_r,:)  \otimes \cdots \otimes {\cA}_{\xi}(i_r,:)\right)V^{\circled{\tiny{$\xi$}}}  \hx^{\circled{\tiny{$\xi$}}} \\
 &\hspace{3cm}\approx \left({\cA}_{1}(i_r,:)  \otimes \cdots \otimes {\cA}_{\xi}(i_r,:)\right) x^{\circled{\tiny{$\xi$}}} := \texttt{NL}_{i_r}.
\end{align*}
Comparing the above quantity with \eqref{eq:nonlinearTerm}, it can be noticed that the quantity $\texttt{NL}_{i_r}$ is nothing but the computation of the corresponding nonlinearity of the original system at a particular grid point. Furthermore, the term $\We^T\cC\cU \in \R^{r\times n_r}$ can be precomputed. This idea is very closely related to empirical interpolation methods, which are commonly used in reduced basis methods or proper orthogonal decomposition for nonlinear systems to reduce the computational cost related to nonlinear terms \cite{morBarMNetal04,morChaS10}. 
\subsection{An illustration using Chafee-Infante equation}\label{subsec:ComputationalIllustration}
 In the following, we illustrate the computation of the reduced nonlinear term $\hat H_\xi$ and the usage of the CUR decomposition with the help of the Chafee-Infante equation. At this stage, we avoid describing the governing PDEs of the Chafee-Infante equation; we provide a detailed description of it in the numerical section. However, at the moment, we just note that it has cubic nonlinearity, i.e., $-v^3$, where $v$ is the dependent variable. Hence, if the system is written in the form given in \cref{eq:poly_sys}, we have the following nonlinear term:
 \begin{equation*}
	H_3 x^{\circled{\tiny{$3$}}} := -x\circ x \circ x.
 \end{equation*}
If the above term is reduced using the projection matrices $\Ve$ and $\We$ as shown in \cref{eq:deter_red_direct}, then we obtain
\begin{align}
 \We^TH_3 \Ve^{\circled{\tiny{$3$}}}\hx^{\circled{\tiny{$3$}}} &=  \hH_3 \hx^{\circled{\tiny{$3$}}}\nonumber\\
 &=  \We^T\left(\Ve \hx\circ \Ve \hx \circ\Ve \hx\right) \nonumber\\
 &=  \We^T \underbrace{\begin{bmatrix} \Ve(1,:) \otimes \Ve(1,:)\otimes \Ve(1,:) \\ \vdots \\  \Ve(n,:)\otimes \Ve(n,:)\otimes \Ve(n,:) \end{bmatrix}}_{=:\cVe} \left(\hx\otimes \hx\otimes \hx\right). \label{eq:H3_computation}
\end{align}
\Cref{eq:H3_computation} shows that instead of explicitly forming $\Ve^{\circled{\tiny{$3$}}}$ to determine $\hH_3$, we can rather compute it  by a smart choice of rows and perform the Kronecker products as shown in \eqref{eq:H3_computation}.    Furthermore, as discussed earlier, the evaluation of the term $\hH_3 \hx^{\circled{\tiny{$3$}}}$, in general, is of complexity $\cO(r^7)$, which might be expensive if the order of the reduced system  $(r)$ is notable. Also, we stress that the term  $\hH_3 \hx^{\circled{\tiny{$3$}}}$ needs to be computed at each time step for every simulation.  To ease that we aim at further approximating  $\hH_3 \hx^{\circled{\tiny{$3$}}}$.  Thus, we first apply the CUR matrix approximation to the matrix $\cVe$, defined in \eqref{eq:H3_computation}, to approximate it by using selected columns and rows, that is 
\begin{equation}\label{eq:CUR_Veff}
 \cVe \approx \cC_{\ensuremath{\mathop{\mathrm{v}}}} \cU_{\text{v}} \cR_{\text{v}},
\end{equation}
where $\cC_{\ensuremath{\mathop{\mathrm{v}}}} \in \R^{r\times n_c}$ and $\cR_{\ensuremath{\mathop{\mathrm{v}}}} \in \R^{n_r\times r^3}$ consist of columns and rows of $\cVe$, respectively. Let us assume that $\cI_{\textbf{R}} \subseteq \{1,\ldots,n\}$ denotes the indices, leading to the construction of the matrix $\cR_{\text{v}}$ in \eqref{eq:CUR_Veff}, i.e.,
$$\cR_{\text{v}} = \cVe(\cI_{\textbf{R}},:).$$
As a result, we use the relation \eqref{eq:CUR_Veff} in \eqref{eq:H3_computation} to obtain
\begin{align*}
 \We^T {\cVe} \hx^{\circled{\tiny{$3$}}} &\approx \We^T \cC_{\ensuremath{\mathop{\mathrm{v}}}} \cU_{\text{v}} \cR_{\text{v}} \hx^{\circled{\tiny{$3$}}} \\
 &\approx \underbrace{\We^T \cC_{\ensuremath{\mathop{\mathrm{v}}}} \cU_{\text{v}} }_{=:\Psi}  \left( \cVe(\cI_{\textbf{R}},:)\hx^{\circled{\tiny{$3$}}} \right).
\end{align*}

Now, it can be noticed that the term $\cVe(\cI_{\textbf{R}},:) \hx^{\circled{\tiny{$3$}}}$ is nothing but evaluating the nonlinearity (in this case, it is a cubic nonlinearity) at indices $\cI_{\textbf{R}}$. As a result, we need to determine the nonlinearity at $n_r$ points.  Moreover, the matrix $\Psi \in \R^{r\times r_v}$ can be precomputed. This is exactly the idea of  hyper-reduction methods such as (D)EIM proposed in \cite{morBarMNetal04,morChaS10} in the case of nonlinear MOR. However, a major difference between the methodology in this paper and (D)EIM is that we do not require time-domain snapshots of the nonlinearity as needed in the case of DEIM, but we rather approximate the nonlinear terms in the reduced-order systems.  Summarizing, for the Chafee-Infante equation in the end, we have 
\begin{align}
 \We^T {\cVe} \left(\hx\otimes \hx\otimes \hx\right) &\approx \Psi \left(\tx\circ \tx \circ \tx\right),
\end{align}
where $\tx = \Ve(\cI_\textbf{R},:)\hx$, which is of complexity $\cO(r\cdot n_r^2 )$.
\begin{remark}
 In the above, we have focused on the computational aspect related to $\hH_\xi$ and $\hH_\xi\hx^{\circled{\tiny{$\xi$}}}$. However, analogously,  a complexity reduction can be performed for computing $\hN_\eta$ and $\hN_\eta\hx^{\circled{\tiny{$\eta$}}}$.
\end{remark}

\section{Parametric Polynomial Systems}
Until now, we have discussed the non-parametric case, i.e., all the system matrices are assumed to be constant. In this section, we briefly present an extension of the idea presented in the previous sections to parametric polynomial system \eqref{eq:para_poly_sys}. Similar to the non-parametric case, we can derive generalized transfer functions for the parametric case, which are given as follows:
\begin{subequations}\label{eq:Parageneral_TF}
\begin{align}
 \bF_L(s_1,\bp) &=  C(\bp)\Phi(s_1,\bp)B(\bp),\\
 \bF^{(\xi)}_H(s_1,\ldots,s_{\xi+1},\bp) & =  C(\bp)\Phi(s_{\xi+1},\bp) H_\xi \left(\Phi(s_\xi,\bp) \otimes \cdots \otimes \Phi(s_{1},\bp)\right)B(\bp)^{\circled{\tiny{$\xi$}}},\\
 \bF_N^{(\eta)}(s_1,\ldots, s_{\eta+1},\bp) & =  C(\bp) \Phi(s_{\eta+1},\bp) N_\eta\left(I_m\otimes\Phi(s_{\eta},\bp) \otimes \cdots \otimes \Phi(s_{1},\bp) \right) B(\bp)^{\circled{\tiny{$\eta$}}},
\end{align}
\end{subequations}
where $\Phi(s,\bp) = (sE(\bp)-A(\bp))^{-1}$. In the following, we present an extension of \cref{lemma:singleInterpolation} to the parametric case. 
\begin{theorem}\label{thm:parametric_interpolation}
	Consider the original parametric polynomial system as given in \eqref{eq:poly_sys}. Let $\sigma_i,\bp_i$, $i \in \{1,\ldots,\tr\}$ be interpolation points such that $sE(\bp)-A(\bp)$ is invertible for all $s\in \{\sigma_1,\ldots, \sigma_{\tr}\}$, $\bp \in \{\bp_1,\ldots, \bp_{\tr}\}$, and let $b_i \in \Rm$ and $c_i \in \Rq$ $i \in \{1,\ldots,\tr\}$ be tangential directions. Moreover, let $V$ and $W$ be defined as follows: 
\begin{subequations}\allowdisplaybreaks
	\begin{align*}
	V_{L}&= \bigcup_{i= 1}^{\tr} \range{\Phi(\sigma_i,\bp_i)B(\bp_i)b_{i}},\\
	V_{N} &= \bigcup_{\eta= 1}^d\bigcup_{i= 1}^{\tr} \range{ \Phi(\sigma_i,\bp_i) N_\eta\left(I_m\otimes \Phi(\sigma_i,\bp_i)B(\bp_i)b_i \otimes \cdots \otimes \Phi(\sigma_i,\bp_i)B(\bp_i)b_i \right)},  \\
	V_H &=  \bigcup_{\xi= 2}^d\bigcup_{i= 1}^{\tr}  \range{ \Phi(\sigma_i,\bp_i) H_\xi \left(\Phi(\sigma_i,\bp_i)B(\bp_i)b_i \otimes \cdots \otimes \Phi(\sigma_i,\bp_i)B(\bp_i)b_i \right) },\\ 
	W_L&= \bigcup_{i= 1}^{\tr}\range{\Phi(\sigma_i,\bp_i)^TC(\bp_i)^Tc_i}\\
	W_{N} &= \bigcup_{\eta= 1}^d \bigcup_{i= 1}^{\tr}\ensuremath{\mathop{\mathrm{range}}\left( \Phi(\sigma_{i},\bp_i) \left(N_\eta \right)_{(2)}\left(I_m\otimes\Phi(\sigma_i,\bp_i)B(\bp_i)b_{i} \otimes \cdots \right.\right.}\\
	&\hspace{5cm}  \otimes \Phi(\sigma_i,\bp_i)B(\bp_i)b_{i} \otimes\Phi(\sigma_i,\bp_i)^TC(\bp_i)^Tc_{i} \Big), \\
	W_{H} &= \bigcup_{\xi= 2}^d \bigcup_{i= 1}^{\tr}\ensuremath{\mathop{\mathrm{range}}\left(\Phi(\sigma_{i},\bp_i) \left(H_\xi\right)_{(2)} \left(\Phi(\sigma_{i},\bp_i)B(\bp_i)b_{i} \otimes \cdots \right.\right.}\\
	&\hspace{5cm}  \otimes \Phi(\sigma_{i},\bp_i)B(\bp_i)b_{i}\otimes \Phi(\sigma_i,\bp_i)^TC(\bp_i)^Tc_{i} \Big), \\
	V &= \range{V_L, V_N, V_H},\\
	W&= \range{W_L, W_N, W_H}.
	\end{align*}
\end{subequations}
If a reduced-order system is computed using the projection matrices $V$ and $W$ given above, assuming $V$ and $W$ are full rank matrices, as follows:
\begin{equation}\label{eq:Paracompute_rom}
\begin{aligned}
	\hE(\bp) &= W^TE(\bp)V, & \hA(\bp)& = W^TA(\bp)V, & \hH_{\xi}(\bp)& = W^TH_{\xi}(\bp)V^{\circled{\tiny{$\xi$}}},\quad \xi \in \{2,\ldots,d\},\\
	\hB(\bp) &= W^TB(\bp), & \hC(\bp)& = C(\bp)V, & \hN_{\eta}(\bp)& = W^TN_{\eta}(\bp)V^{\circled{\tiny{$\eta$}}},\quad \eta\in \{1,\ldots,d\}.
\end{aligned}
\end{equation}
then the following interpolation conditions are fulfilled:
	\begin{subequations}\allowdisplaybreaks
		\begin{align*}
		\bF_L(\sigma_i,\bp_{i}) b_i&= \hat{\bF}_L(\sigma_i,\bp_{i})b_i, \\
		c_i^T\bF_L(\sigma_i,\bp_{i})&= c_i^T\hat{\bF}_L(\sigma_i,\bp_{i}), \\
		\dfrac{d}{ds_1} c_i^T\bF_L(\sigma_i,\bp_{i})b_i&= \dfrac{d}{s_1} c_i^T\hat{\bF}_L(\sigma_i,\bp_{i})b_i,  \\
		\dfrac{d}{d\bp} c_i^T\bF_L(\sigma_i,\bp_{i})b_i&= \dfrac{d}{d\bp} c_i^T\hat{\bF}_L(\sigma_i,\bp_{i})b_i,  \\
		\bF_N^{(\eta)}(\sigma_i,\ldots,\sigma_i,\bp_{i})\left(I_m\otimes b_i^{\circled{\tiny {$\eta$}}}\right) &= 	\hat F_N^{(\eta)}(\sigma_i,\ldots,\sigma_i,\bp_{i}) \left(I_m\otimes b_i^{\circled{\tiny {$\eta$}}}\right), \\
		c_i^T\bF_N^{(\eta)}(\sigma_i,\ldots,\sigma_i,\bp_{i})\left( I_m^{\circled{\tiny {$2$}}} \otimes  b_i^{\circled{\tiny {$\eta{-}1$}}}\right) &= 	c_i^T\hat{\bF}_N^{(\eta)}(\sigma_i,\ldots,\sigma_i,\bp_{i})\left( I_m^{\circled{\tiny {$2$}}} \otimes  b_i^{\circled{\tiny {$\eta{-}1$}}}\right)\\
		\dfrac{d}{ds_j}c_i^T\bF_N^{(\eta)}(\sigma_i,\ldots,\sigma_i,\bp_{i})\left(I_m\otimes b_i^{\circled{\tiny {$\eta$}}}\right) &= 	\dfrac{d}{ds_j} \hat c_i^T\hat{\bF}_N^{(\eta)}(\sigma_i,\ldots,\sigma_i,\bp_{i}) \left(I_m\otimes b_i^{\circled{\tiny {$\eta$}}}\right), \\
		\dfrac{d}{d\bp}c_i^T\bF_N^{(\eta)}(\sigma_i,\ldots,\sigma_i,\bp_{i})\left(I_m\otimes b_i^{\circled{\tiny {$\eta$}}}\right) &= 	\dfrac{d}{d\bp} \hat c_i^T\hat{\bF}_N^{(\eta)}(\sigma_i,\ldots,\sigma_i,\bp_{i}) \left(I_m\otimes b_i^{\circled{\tiny {$\eta$}}}\right), \\
		\bF_H^{(\xi)}(\sigma_i,\ldots,\sigma_i,\bp_{i}) b_i^{\circled{\tiny {$\xi$}}} &= 	\hat{\bF}_H^{(\xi)}(\sigma_i,\ldots,\sigma_i,\bp_{i})  b_i^{\circled{\tiny {$\xi$}}}, \\
		c_i^T\bF_H^{(\xi)}(\sigma_i,\ldots,\sigma_i,\bp_{i})\left( I_m \otimes  b_i^{\circled{\tiny {$\xi{-}1$}}}\right) &= 	c_i^T\hat{\bF}_H^{(\xi)}(\sigma_i,\ldots,\sigma_i,\bp_{i})\left( I_m \otimes  b_i^{\circled{\tiny {$\xi{-}1$}}}\right),\\
		\dfrac{d}{ds_j}c_i^T\bF_H^{(\xi)}(\sigma_i,\ldots,\sigma_i,\bp_{i}) b_i^{\circled{\tiny {$\xi$}}} &= 	\dfrac{d}{ds_j}  c_i^T\hat{\bF}_H^{(\xi)}(\sigma_i,\ldots,\sigma_i,\bp_{i}) b_i^{\circled{\tiny {$\xi$}}},\\
		\dfrac{d}{d\bp}c_i^T\bF_H^{(\xi)}(\sigma_i,\ldots,\sigma_i,\bp_{i}) b_i^{\circled{\tiny {$\xi$}}} &= 	\dfrac{d}{d\bp}  c_i^T\hat{\bF}_H^{(\xi)}(\sigma_i,\ldots,\sigma_i,\bp_{i}) b_i^{\circled{\tiny {$\xi$}}},
		\end{align*}
	\end{subequations}
	where $F_L$, $F_N^{(\eta)}$, $F_H^{(\xi)}$, $\hF_L$, $\hF_N^{(\eta)}$, $\hF_H^{(\xi)}$ are also assumed to be differentiable with respect to $s_j$  and $\bp$.
\end{theorem}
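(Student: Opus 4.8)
The plan is to reduce \Cref{thm:parametric_interpolation} to the already-established non-parametric results, rather than re-deriving everything from scratch. First I would observe that the parametric transfer functions in \eqref{eq:Parageneral_TF} are structurally identical to the non-parametric ones in \eqref{eq:general_TF_MIMO}, with $\Phi(s)$, $B$, $C$, $H_\xi$, $N_\eta$ everywhere replaced by $\Phi(s,\bp)$, $B(\bp)$, $C(\bp)$, $H_\xi$, $N_\eta$. Consequently, for each fixed parameter sample $\bp_i$, the columns of $V$ and $W$ associated with that sample are exactly the vectors prescribed by \Cref{lemma:singleInterpolation} applied to the (non-parametric) realization $(E(\bp_i),A(\bp_i),H_\xi,N_\eta,B(\bp_i),C(\bp_i))$. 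Because the spaces $V$ and $W$ used to build the reduced system contain these per-sample subspaces, the Petrov--Galerkin projection \eqref{eq:Paracompute_rom} restricted to parameter $\bp_i$ inherits the interpolation identities \eqref{eq:1}--\eqref{eq:9} from \Cref{lemma:singleInterpolation}; this immediately yields the frequency-interpolation conditions (the ones not involving $\tfrac{d}{d\bp}$), including the Hermite-type conditions in $s_j$, since those only ever use the $i$th block of columns.

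Next I would handle the new ingredient, namely the parameter-derivative conditions $\tfrac{d}{d\bp}c_i^\top\bF_{\{L,N,H\}}(\sigma_i,\dots,\bp_i) = \tfrac{d}{d\bp}c_i^\top\hat\bF_{\{L,N,H\}}(\sigma_i,\dots,\bp_i)$. The strategy here mirrors the classical argument that matching a rational function and its reduced counterpart at a point, together with matching the left/right subspaces, forces the first derivative to match as well. Concretely, differentiating $\Phi(s,\bp) = (sE(\bp)-A(\bp))^{-1}$ with respect to a component of $\bp$ gives $\partial_{\bp}\Phi = -\Phi\,(s\,\partial_{\bp}E - \partial_{\bp}A)\,\Phi$, and similarly for $B(\bp)$, $C(\bp)$; expanding $\tfrac{d}{d\bp}(c_i^\top\bF)$ by the product rule produces a sum of terms each of which is a ``left vector'' times a ``matrix'' times a ``right vector,'' where — crucially — the surviving left factor lies in $\operatorname{colspan}W$ and the surviving right factor lies in $\operatorname{colspan}V$ by construction (the same subspaces $V_L,W_L,\dots$ that give the zeroth-order match). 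One then inserts $V\hat\Phi W^\top\Phi^{-1}=I$ and $W\hat\Phi^\top V^\top\Phi^{-\top}=I$ exactly as in the computations following \eqref{eq:VPhi} in the proof of \Cref{thm:gen_interpolation}, so each term of the derivative on the original side equals the corresponding term on the reduced side. I would write this out carefully for $\bF_L$ and then remark that the $\bF_N^{(\eta)}$ and $\bF_H^{(\xi)}$ cases follow by the same bookkeeping combined with the mode-2 matricization identity \eqref{eq:matricizationRelationMatrix}–\eqref{eq:NtensorRel} already used in \Cref{thm:gen_interpolation}.

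I expect the main obstacle to be purely organizational rather than conceptual: verifying that every term generated by the parameter differentiation of the nonlinear kernels still has both of its outer factors captured by the prescribed subspaces. For $\bF_H^{(\xi)}$, differentiating $\Phi(s_{\xi+1},\bp)H_\xi(\Phi(s_\xi,\bp)B(\bp)b_i\otimes\cdots)$ in $\bp$ hits each of the $\xi+1$ occurrences of $\Phi$ as well as each $B(\bp)$; one has to check that, after using the zeroth-order interpolation already proved and the tensor-matricization relations, the projection collapses all these terms correctly — in particular that the ``interior'' derivatives (those on $\Phi(s_j,\bp)B(\bp)b_i$ factors that are neither the outermost left nor right factor) are absorbed precisely because the corresponding vectors $\Phi(\sigma_i,\bp_i)B(\bp_i)b_i$ are themselves in $V$ and the identity $V\hat\Phi W^\top\Phi^{-1}=I$ applies at each such interior slot. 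This is the same mechanism that makes the Hermite conditions in $s_j$ work, so the right presentation is to prove a single ``building-block'' lemma — if a left factor is in $\operatorname{colspan}W$, a right factor is in $\operatorname{colspan}V$, and all interior $\Phi B b$ factors are in $\operatorname{colspan}V$, then the projected quantity equals the original — and then note that $\tfrac{d}{ds_j}$ and $\tfrac{d}{d\bp}$ both produce only such building blocks. Finally I would close by stating that the differentiability hypotheses on $E(\bp),A(\bp),B(\bp),C(\bp)$ (hence on the transfer functions) are exactly what is needed to justify interchanging differentiation with the algebraic manipulations above, which completes the proof.
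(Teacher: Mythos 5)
Your overall strategy coincides with what the paper intends: the paper's own ``proof'' of \Cref{thm:parametric_interpolation} is a one-line deferral to \Cref{thm:gen_interpolation}, and your plan --- freeze $\bp=\bp_i$, observe that the $i$th block of columns of $V$ and $W$ is exactly what \Cref{lemma:singleInterpolation} prescribes for the non-parametric realization $(E(\bp_i),A(\bp_i),H_\xi(\bp_i),N_\eta(\bp_i),B(\bp_i),C(\bp_i))$, and invoke subspace containment --- is the correct elaboration of that deferral for all conditions not involving $\tfrac{d}{d\bp}$. Your treatment of $\tfrac{d}{d\bp}c_i^T\bF_L b_i$ via the product rule and the two projector identities is also sound: every term produced has either its right factor equal to $\Phi(\sigma_i,\bp_i)B(\bp_i)b_i\in\operatorname{Ran}V$ or its left factor equal to $c_i^TC(\bp_i)\Phi(\sigma_i,\bp_i)$ with $\Phi^TC^Tc_i\in\operatorname{Ran}W$ (or both), which is the classical Hermite mechanism.

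There is, however, one concrete soft spot in your ``building-block lemma'' as stated, namely the claim that interior-slot derivatives of $\bF_H^{(\xi)}$ and $\bF_N^{(\eta)}$ are absorbed \emph{because the vectors $\Phi(\sigma_i,\bp_i)B(\bp_i)b_i$ are themselves in $V$}. That containment gives the zeroth-order match in each interior slot, but it does not give the derivative match there: differentiating an interior factor produces $\partial(\Phi Bb_i)$ in that slot, while the reduced side produces $V\,\partial(\hat\Phi\hat Bb_i)=\partial\bigl(\Pi\,\Phi Bb_i\bigr)$ with $\Pi=V\hat\Phi W^T(sE-A)$, and these differ by $\partial\bigl[(I-\Pi)\Phi Bb_i\bigr]$, which is generically nonzero even though $(I-\Pi)\Phi Bb_i$ vanishes at the interpolation point. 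The residual term has the form $c_i^TC\Phi\,H_\xi\bigl(v\otimes\cdots\otimes g\otimes\cdots\otimes v\bigr)$ with $g\neq 0$, and killing it is precisely the job of the dual subspaces $W_H$ and $W_N$: by the matricization identity \eqref{eq:matricizationRelation} this term can be rewritten as $g^T$ times a mode-$j$ matricization of $\bten{H}_\xi$ applied to $(v\otimes\cdots\otimes v\otimes \Phi^TC^Tc_i)$, and it is the membership of $\Phi^T(H_\xi)_{(2)}(\cdots\otimes\Phi^TC^Tc_i)$ in $\operatorname{Ran}W$ that makes the corresponding cofactor vanish. So the correct building block is an error factorization (each telescoped summand is a product of two factors, one vanishing at the point and the other annihilated by the dual subspace), not absorption through $V$ alone; without invoking $W_H$ and $W_N$ your argument proves the zeroth-order nonlinear conditions but not the $s_j$- or $\bp$-derivative conditions in the interior slots. (This is also the only place where the hypothesis list of the theorem actually uses $W_H,W_N$, so a proof that never touches them cannot be complete.)
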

\begin{proof}
	The proof of the lemma can be proven along the lines of the proof of \Cref{thm:gen_interpolation}. Therefore, for the sake of brevity of the paper, we skip the proof. 
\end{proof}
In the above, we have assumed a general parametric structure for the system matrices, e.g., $E(\bp), A(\bp)$, and the corresponding reduced-order system can be computed as shown in \eqref{eq:Paracompute_rom}. However, if we assume an affine parametric structure of the system matrices as follows:
\begin{equation}\label{eq:affineSys}
\begin{aligned}
E(\bp) &= \sum_{i=1}^{t_e} \alpha^{(i)}_e(\bp) E^{(i)},  & A(\bp) &= \sum_{i=1}^{t_a} \alpha^{(i)}_a(\bp) A^{(i)}, & H_\xi(\bp) &= \sum_{i=1}^{t_{h_\xi}} \alpha^{(i)}_{h_\xi}(\bp) H_\xi^{(i)},\\
B(\bp) &= \sum_{i=1}^{t_b} \alpha^{(i)}_b(\bp) B^{(i)},  & C(\bp) &= \sum_{i=1}^{t_c} \alpha^{(i)}_c(\bp) C^{(i)}, &N_\eta(\bp) &= \sum_{i=1}^{t_{n_\eta}} \alpha^{(i)}_{n_\eta}(\bp) N_\eta^{(i)},
\end{aligned}
\end{equation}
then the resulting reduced-order system can be determined, having the same structure
\begin{equation}\label{eq:parameter_reduced}
\begin{aligned}
\hE(\bp) &= \sum_{i=1}^{t_e} \alpha^{(i)}_e(\bp) \hE^{(i)},  & \hA(\bp) &= \sum_{i=1}^{t_a} \alpha^{(i)}_a(\bp) \hA^{(i)}, & \hH_\xi(\bp) &= \sum_{i=1}^{t_{h_xi}} \alpha^{(i)}_{h_\xi}(\bp) \hH_\xi^{(i)},\\
\hB(\bp) &= \sum_{i=1}^{t_b} \alpha^{(i)}_b(\bp) \hB^{(i)},  & \hC(\bp) &= \sum_{i=1}^{t_c} \alpha^{(i)}_c(\bp) \hC^{(i)}, &\hN_\eta(\bp) &= \sum_{i=1}^{t_{n_\eta}} \alpha^{(i)}_{n_\eta}(\bp) \hN_\eta^{(i)},
\end{aligned}
\end{equation}
where the original matrices are projected by the standard projection for given projection matrices; for example:
\begin{equation}
\begin{aligned}
\hE^{(1)} &= W^TE^{(1)}V, & \hA^{(1)}& = W^TA^{(1)}V, & \hH_{\xi}^{(1)}& = W^TH_{\xi}^{(1)}V^{\circled{\tiny{$\xi$}}},\quad \xi \in \{2,\ldots,d\},\\
\hB^{(1)} &= W^TB^{(1)}, & \hC^{(1)}& = C^{(1)}V, & \hN_{\eta}^{(1)}& = W^TN_{\eta}^{(1)}V^{\circled{\tiny{$\eta$}}},\quad \eta\in \{1,\ldots,d\}.
\end{aligned}
\end{equation}

Furthermore, like in the non-parametric case, we can easily develop the connection to the Loewner and shifted-Loewner type system, assuming we have sufficient interpolation points for frequency and parameter variables. In \Cref{algo:para_MOR}, we outline all the steps to construct reduced-order systems for the parametric case, which is again inspired by the Loewner-type MOR.

\begin{algorithm}[!tb]
	\caption{Model Reduction for Parametric Polynomial Systems \eqref{eq:para_poly_sys}.}
	\label{algo:para_MOR}
	\begin{algorithmic}[1]
		\Statex {\bf Input:} The original system~\cref{eq:para_poly_sys} with the affine structure as in \eqref{eq:affineSys}, and a set of interpolation points for the frequency and parameters, i.e., $\sigma_i$ and~$\bp_i$, $i \in \{1,\dots,\tilde r\}$.
		\Statex {\bf Output:}  A reduced-order system given as in \cref{eq:parameter_reduced}. 
		\State Determine $V$ and $W$ as shown in \Cref{thm:parametric_interpolation}.
		\State Based on $E(\bp)$ and $A(\bp)$ in \cref{eq:affineSys}, define
$$
			\LL^{(i)} = W^TE^{(i)} V, \quad i \in \{1,\ldots, t_e\},\qquad
			\LL_s^{(i)} = W^TA^{(i)} V, \quad i \in \{1,\ldots, t_a\}
$$
\begin{align}
\bbm \LL^{(1)},\ldots,\LL^{(t_e)}, \LL^{(1)}_s,\ldots,\LL_s^{(t_a)} \ebm &= Y_1\Sigma_1 X^T_1\\
\bbm \LL^{(1)}\\ \vdots\\  \LL^{(t_e)} \\  \LL^{(1)}_s\\ \vdots\\ \LL_s^{(t_a)} \ebm &= Y_2\Sigma_2 X^T_2.
\end{align}
\State Define $Y_r := Y_1(:,1:r)$ and $X_r := X_2(:,1:r)$. 
\State Determine the compact projection matrices:
\Statex~$V := \texttt{orth}{\left(VX_r\right)} $ and $W := \texttt{orth}{\left(WY_r\right)}$.
\State Determine a reduced-order system as shown in \eqref{eq:parameter_reduced}.
	\end{algorithmic}
\end{algorithm}

\section{Numerical Results}\label{sec:Numerics}
In this section, we illustrate the efficiency of the proposed methods by means of two nonlinear PDE examples and their variants.  All the simulations were done
on an \intel~\coreiseven-6700 CPU@3.40GHz, 8 MB cache, 8 GB RAM, Ubuntu 16.04, \matlab~Version 9.1.0.441655(R2016b) 64-bit(glnxa64). In the following, we note some details used in the numerical simulations:
\begin{itemize}
\item All original and reduced-order systems are integrated by the routine \texttt{ode15s}
in \matlab~with  relative error and absolute error tolerances of $10^{-10}$.
\item  We measure the output at 500 equidistant points within the time interval
$[0, T]$, where $T$ is the end time. 
\item We choose interpolation points for the frequency $(s)$ in a logarithmic scale  for a given frequency range, and   interpolation points for parameters $(\bp)$ are chosen randomly using the \texttt{rand} command. To ensure reproducibility, we use \texttt{randn(`seed',0)} to initialize a random number generator. 
\end{itemize}
\subsection{Chafee-Infante equation}\label{subsec:chafee_nonpara}
In our first example,  we deal with a widely considered one-dimensional Chafee-Infante equation. Its governing equation and boundary conditions are given as follows:
\begin{equation}\label{eq:chafeeEqn}
\begin{aligned}
\dot{v}(t) &=v_{xx} +v(1-v^2), \quad x \in (0,L)\times (0,T), & v(0,\cdot)& = u(t), ~~ (0,T),\\
v_x(L,\cdot) &=0, ~~(0,T), & v(x,0) &=0,~~(0,L).
\end{aligned}
\end{equation}
MOR of this example has been considered in various papers \cite{morBenB15,morBenG17,morBenGG18,gosea2018data}, where the authors have proposed different methods to reduce it. The governing equation has cubic nonlinearity. In the literature, a common approach to reduce such a cubic nonlinear system via system-theoretic MOR is twofold. First, it is to rewrite the cubic system into a QB system by introducing auxiliary variables. Thereafter, one can reduce it by employing a MOR scheme for QB systems such as balanced truncation \cite{morBenG17}, and interpolation based approaches, e.g., \cite{morAhmBJ16,morBenB15,morBenGG18}. However, in this process, we lose the original cubic nonlinearity structure in the reduced-order system.  On the other hand, the proposed method, in this paper, allows us to reduce a cubic system directly, having preserved the nonlinearity in the reduced-order system. 

We set the domain length $L = 1$. The system of equations~\cref{eq:chafeeEqn} is discretized using a finite-difference method by taking $k = 500$ grid points. Next, we aim at constructing a reduced cubic system by applying \Cref{algo:nonParaPoly}. For this purpose,  we consider the frequency range $\left[10^{-3},10^3\right]$.  For comparison, we also rewrite the cubic system into the QB form, which results in an equivalent QB system of order $n_{qb} = 2\cdot 500 =1000$. We consider the same frequency range in order to employ \cref{algo:nonParaPoly} to construct a reduced QB system.

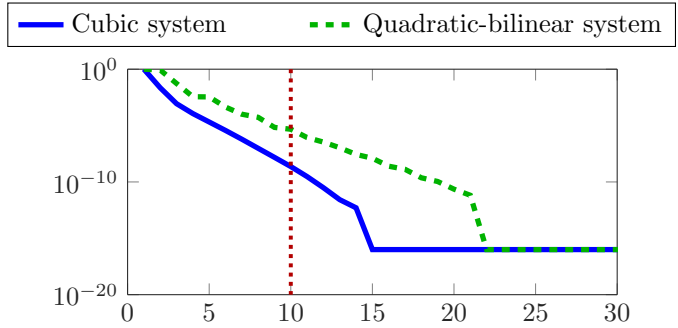
\begin{figure}[!tb]
  \centering
  \begin{tikzpicture}
      \begin{customlegend}[legend columns=-1, legend style={/tikz/every even column/.append style={column sep=1cm}} , legend entries={Cubic system, Quadratic-bilinear system }, ]
      \addlegendimage{blue, line width =2pt}
      \addlegendimage{black!30!green,dashed,line width = 2pt}
      \end{customlegend}
  \end{tikzpicture}
  \setlength\fheight{3.0cm}
  \setlength\fwidth{.5\textwidth}
  \includetikz{ChafeeDecaySingularValues}
  \caption{Relative decay of singular values based on the Loewner pencils, obtained using the original cubic system and its equivalent transformed QB system. }
  \label{figure:chafee_SVD}
\end{figure}

First, in \Cref{figure:chafee_SVD}, we observe the decay of the singular values, obtained from the Loewner pencil ($\LL-s\LL_s$).  As one can expect, the singular values related to the original cubic system decay faster as compared to its equivalent QB form. Hence, for the same order of the reduced-order system, we can anticipate a better quality reduced system.  Next, we construct the reduced cubic and QB systems of order $r = 10$. To test the quality of both reduced cubic and QB systems, we perform time-domain simulations using control inputs $u^{(1)}(t) = 10(\sin(\pi t)+1)$ and $u^{(2)}(t) = 5\left(te^{-t}\right)$, which are compared in \Cref{fig:chafee_input1,fig:chafee_input2} by showing the responses and relative errors. As can be seen from these figures, the cubic reduced-order system captures the dynamics of the original system much better as compared to the QB system; precisely, we gain  up to 3 orders of magnitude better accuracy using the new method.

\begin{figure}[!tb]
    \centering
      \begin{tikzpicture}
    \begin{customlegend}[legend columns=-1, legend style={/tikz/every even column/.append style={column sep=.5cm}} , legend entries={Original system, Cubic system, Quadratic-bilinear system}, ]
    \addlegendimage{color=black!30!red,solid,line width=2pt,mark = o}
    \addlegendimage{color=blue,line width=2pt}
    \addlegendimage{color=black!30!green,line width=2pt,dashed}
    \end{customlegend}
    \end{tikzpicture}    
    \begin{subfigure}[t]{.5\textwidth}
        \centering
        \setlength\fheight{2.5cm}
	\setlength\fwidth{.8\textwidth}
	\includetikz{ChafeeInput1_Response}        
	\caption{Transient response.}
	\label{fig:chafee_input1_res}
    \end{subfigure}%
    \begin{subfigure}[t]{.5\textwidth}
        \centering
        \setlength\fheight{2.5cm}
	\setlength\fwidth{.8\textwidth}
	\includetikz{ChafeeInput1_RelErr}
	\caption{Relative error.}
	\label{fig:chafee_input1_err}
    \end{subfigure}
    \caption{Chafee-Infante equation: a comparison of the original and reduced-order systems for the input $u^{(1)} = 10\left(\sin(\pi t)+1\right)$.}
    \label{fig:chafee_input1}
\end{figure}

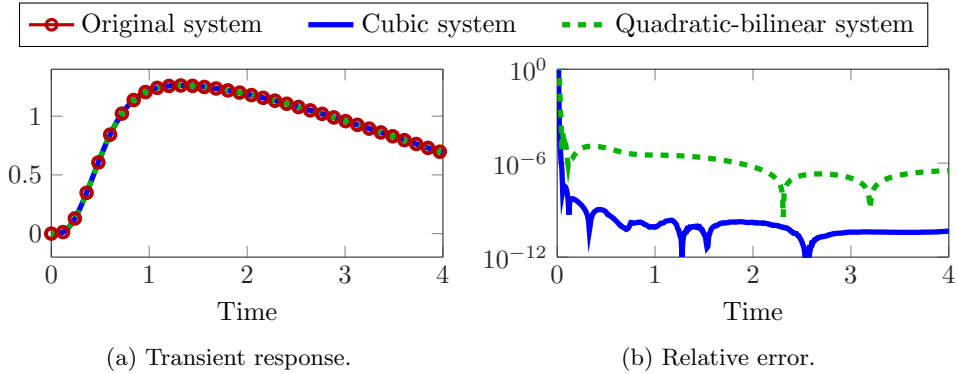
\begin{figure}[!tb]
    \centering
      \begin{tikzpicture}
    \begin{customlegend}[legend columns=-1, legend style={/tikz/every even column/.append style={column sep=.5cm}} , legend entries={Original system, Cubic system, Quadratic-bilinear system}, ]
    \addlegendimage{color=black!30!red,solid,line width=1.2pt,mark = o}
    \addlegendimage{color=blue,line width=2pt}
    \addlegendimage{color=black!30!green,line width=2pt,dashed}
    \end{customlegend}
    \end{tikzpicture}    
    \begin{subfigure}[t]{.5\textwidth}
        \centering
        \setlength\fheight{2.5cm}
	\setlength\fwidth{.8\textwidth}
	\includetikz{ChafeeInput2_Response}        
	\caption{Transient response.}
	\label{fig:chafee_input2_res}
    \end{subfigure}%
    \begin{subfigure}[t]{.5\textwidth}
        \centering
        \setlength\fheight{2.5cm}
	\setlength\fwidth{.8\textwidth}
	\includetikz{ChafeeInput2_RelErr}
	\caption{Relative error.}
	\label{fig:chafee_input2_err}
    \end{subfigure}
    \caption{Chafee-Infante equation: a comparison of the original and reduced-order systems for the input $u^{(2)} = 5\left(te^{-t}\right)$.}
    \label{fig:chafee_input2}
\end{figure}

\subsection{The FitzHugh-Nagumo(FHN) model}
As a second  non-parametric example, we consider the FHN system, which describes basic neuronal dynamics. This is a coupled cubic nonlinear system, whose governing equations and boundary conditions are as follows:
\begin{equation}
\begin{aligned}
    \epsilon v_t &=  \epsilon^2 v_{xx} + v(v-0.1)(1-v) -w + q,\\
    w_t &= hv - \gamma w + q\\
 \end{aligned}
\end{equation}
with boundary conditions
\begin{align*}
 v(x,0) &= 0,& w(x,0)& = 0, \qquad x\in (0,L),\\
 v_x(0,t) &= i_0(t), & v_x(1,t) &= 0,\qquad t\geq 0,
\end{align*}
where  $h = 0.05, \gamma = 2, q = 0.05, L = 0.1$ and $i_0$ acts an actuating control input which takes the values $5\cdot 10^4 t^3e^{-t}$, and briefly mentioning, the variables $v$ and $w$ denote the activation and de-activation of a neuron, respectively. We discretize the governing equation using a finite difference method, having taken $100$ grid points. This leads to a cubic system of order $n = 200$. We use the same output setting as used, e.g., in \cite{morBenGG18}. The system has two inputs and two outputs, thus, is a MIMO system. The MOR problem related to the FHN system has been considered by several researchers, see, e.g., \cite{morBenG17,morBenGG18,morChaS10}.  Similar to  the previous example, system-theoretic MOR of the FHN system  has also been considered by first rewriting it into a QB system and employing MOR  schemes such as interpolation-based and balanced truncation to reduce it. Thus, we obtain an equivalent QB system of order $n_{qb} = 300$. However, by doing so, we lose the original nonlinear structure. 


\begin{figure}[!tb]
  \centering
  \begin{tikzpicture}
      \begin{customlegend}[legend columns=2, legend style={/tikz/every even column/.append style={column sep=1cm}} , legend entries={Cubic sys. (two-sided), Cubic sys. (one-sided),QB sys. (two-sided), QB sys. (one-sided) }, ]
      \addlegendimage{blue, line width =2pt}
      \addlegendimage{blue, dashed,forget plot,every mark/.append style={solid, fill=red}, mark=otimes*,line width = 1.2pt}
      \addlegendimage{green!30!green,dashed,forget plot,every mark/.append style={solid, fill=gray}, mark=triangle*,line width = 2pt}
            \addlegendimage{green!30!green,dotted, line width =2pt}
     \end{customlegend}
  \end{tikzpicture}
  \setlength\fheight{3.0cm}
  \setlength\fwidth{.5\textwidth}
  \includetikz{NewFHNDecaySingularValues}
 \caption{FHN model: relative decay of singular values based on the Loewner pencils, obtained via one-sided and two-sided projection of the corresponding systems.}
  \label{figure:FHN_SVD}
\end{figure}
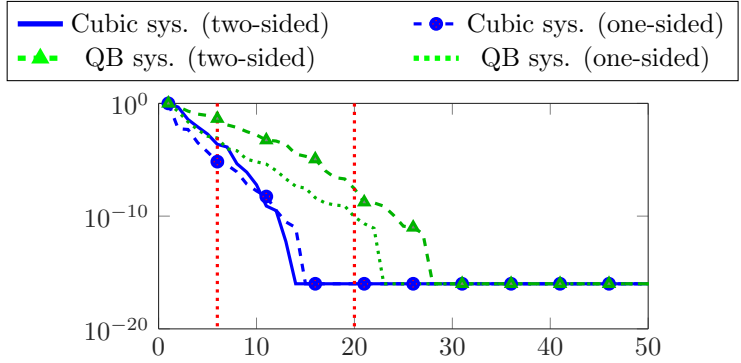

In order to apply \Cref{algo:nonParaPoly} to obtain reduced-order systems for the original cubic and its equivalent QB systems, we choose $200$ points in the frequency range $\left[10^{-2}, 10^2\right]$. In \Cref{figure:FHN_SVD}, we first show the relative decay of the singular values for the Loewner pencils (denoted by cubic sys.\ (two-sided) and QB sys.\ (two-sided)).  Therein, as expected, we observe that the singular values of the cubic system decay faster as compared to its equivalent QB system. Next, we construct reduced cubic and QB systems of order $r = 20$. To determine the quality of the reduced systems, we perform time-domain simulations and plot the result in \Cref{fig:FHN_input1}. We observe that the obtained cubic reduced system captures the dynamics of the original system very well, whereas the reduced QB system is unstable. This illustrates a common shortcoming of  \Cref{algo:nonParaPoly} (the Loewner approach) that it does not always result in a stable reduced system. 

As a remedy, we propose to obtain a reduced-order system using Galerkin (one-sided) projection.  For this, we determine the matrix $V$ at Step~1 in \Cref{algo:nonParaPoly} and set $W = V$. This is followed by determining $X_r$ as shown in Step~4 of the algorithm and determine the projection matrix $\Ve$. Subsequently, we set $\We = \Ve$ and compute a reduced-order system. As a result, we have a reduced-order system by Galerkin projection instead of Petrov-Galerkin projection. An advantage of doing Galerkin projection  is the (local) stability of the reduced-order system in some cases. Next, we compute reduced systems of order $r = 20$ using the cubic and its equivalent QB form, using Galerkin projection. 

First, we observe the decay of singular values in \Cref{figure:FHN_SVD}, showing that for Galerkin projection as well, the decay is faster for cubic systems as compared to the equivalent QB systems. Furthermore, we compare the transient response of the reduced-order systems obtained from Galerkin projection in \Cref{fig:FHN_input1}, which shows that the cubic reduced-order system performs much better as compared to the QB reduced systems. Interestingly, we also observe that the reduced cubic systems, using Petrov-Galerkin and Galerkin projection, tend to perform equally good as the time progresses but in the beginning, the reduced system, obtained using Petrov-Galerkin projection, performs better.  

Furthermore, we mention that the same order of accuracy as the reduced QB system (one-sided) of order $r = 20$ can be obtained from the reduced cubic order $r = 6$ only. Surprisingly, we also observe that the typical limit-cycles behavior of the system can be captured by the reduced cubic system of order as low as $r = 2$. On the other hand, the reduced QB could not capture the typical limit cycles behavior below the order $r = 15$. This is a profound observation, which illustrates that keeping the original structure of nonlinearity can lead to much better reduced-order systems.

\definecolor{mycolor1}{rgb}{1.00000,0.00000,1.00000}%
\definecolor{mycolor2}{rgb}{0.00000,1.00000,1.00000}%
\definecolor{mycolor3}{rgb}{1.00000,1.00000,0.00000}%

\begin{figure}[!tb]
    \centering
      \begin{tikzpicture}
    \begin{customlegend}[legend columns=2, legend style={/tikz/every even column/.append style={column sep=.0cm}} , legend entries={Ori.\  sys.\ , Cubic sys.\ (two-sided $r = 20$), Cubic sys.\ (one-sided $r = 20$), QB sys.\ (one-sided $r = 20$), Cubic sys.\ (two-sided $r = 6$)}, ]
    \addlegendimage{color=black!30!red,solid,line width=1.2pt,mark = o}
    \addlegendimage{color=blue,line width=2pt}
        \addlegendimage{color=mycolor2,solid,line width=2pt,mark = square*}
    \addlegendimage{color=black!30!green,line width=2pt,dashed, mark = triangle*,every mark/.append style={solid}}
    \addlegendimage{color=mycolor3!30!black,dotted, mark = +, every mark/.append style={solid}, line width = 1pt}
    \end{customlegend}
    \end{tikzpicture}    
    \begin{subfigure}[t]{.4\textwidth}
        \centering
        \setlength\fheight{2.5cm}
	\setlength\fwidth{.7\textwidth}
	\includetikz{NewFHNInput1_Response}        
	\caption{Transient response.}
	\label{fig:FHN_input1_res}
    \end{subfigure}%
    \begin{subfigure}[t]{.4\textwidth}
        \centering
        \setlength\fheight{2.5cm}
	\setlength\fwidth{.7\textwidth}
	\includetikz{NewFHNInput1_RelErr}
	\caption{Relative error.}
	\label{fig:FHN_input1_err}
    \end{subfigure}
    \caption{FHN model: a comparison of the original and reduced cubic and QB systems using one-sided and two-sided projections, having employed \cref{algo:nonParaPoly}.}
    \label{fig:FHN_input1}
\end{figure}
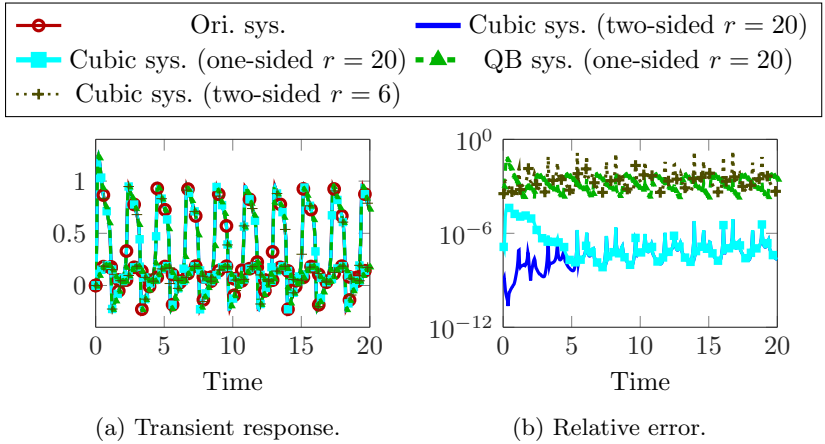

\subsection{Parametric Chafee-Infante equation}
As our parametric example, we consider the following parametric Chafee-Infante equation:
\begin{equation}
 \dot{v}(t) =v_{xx} +v(\bp-v^2),\quad x\in (0,L)\times (0,T),\\
\end{equation}
where $\bp \in \left[0.25,2\right]$. The boundary and initial conditions are the same as given in \Cref{subsec:chafee_nonpara}, and the domain length and discretization scheme are also chosen the same as in \Cref{subsec:chafee_nonpara}.  Next, we aim at constructing a reduced cubic parametric system using \Cref{algo:para_MOR}. For this, we take $200$ points in the frequency range $\left[10^{-3},10^3\right]$ and the equal number of points for the parameter in the considered interval. 

\begin{figure}[!tb]
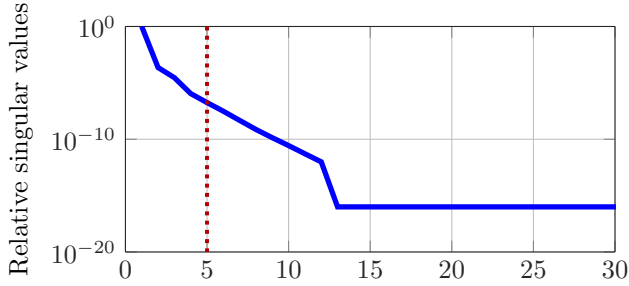

  \centering
  \setlength\fheight{3.0cm}
  \setlength\fwidth{.5\textwidth}
  \includetikz{ChafeeDecaSingularValuesPara}
  \caption{Parametric Chafee-Infante equation: relative decay of singular values based on the Loewner pencil.}
  \label{figure:chafee_SVD_Para}
\end{figure}

First, in \Cref{figure:chafee_SVD_Para}, we plot the decay of the singular values based on the Loewner pencil, which decays exponentially.  Subsequently, we determine a reduced parametric system of order $r = 5$. To compare the quality of the reduced-order system, we simulate for the same inputs as used in \Cref{subsec:chafee_nonpara} and for $\bp = \{0.25,1,2\}$. We plot the transient response and relative errors in \Cref{fig:chafee_input1_para,fig:chafee_input2_Para}, illustrating that the reduced parametric system can capture the dynamics of the system for different inputs and different parameter.

\begin{figure}[!tb]
    \centering
      \begin{tikzpicture}
    \begin{customlegend}[legend columns=3, legend style={/tikz/every even column/.append style={column sep=0.2cm}} , legend entries={Ori. sys. $(p = 0.25)$,Ori. sys. $(p = 1.0)$,Ori. sys. $(p = 2.0)$,Red. sys. $(p = 0.25)$, Red. sys. $(p = 1.0)$,  Red. sys. $(p = 2.0)$}, ]
    \addlegendimage{color=black!30!red,solid,line width=1.2pt,mark = triangle}
    \addlegendimage{color=black!30!red,solid,line width=1.2pt,mark = diamond}
    \addlegendimage{color=black!30!red,solid,line width=1.2pt,mark = otimes}
    \addlegendimage{color=blue,solid,line width=1.2pt,mark = triangle*,dashed}
    \addlegendimage{color=blue,solid,line width=1.2pt,mark = diamond*,dashed}
    \addlegendimage{color=blue,solid,line width=1.2pt,mark = otimes*,dashed}    \end{customlegend}
    \end{tikzpicture}    
    \begin{subfigure}[t]{.5\textwidth}
        \centering
        \setlength\fheight{2.5cm}
	\setlength\fwidth{.8\textwidth}
	\includetikz{ChafeeInput1_Response_Para}        
	\caption{Transient response.}
	\label{fig:chafee_input1_res_Para}
    \end{subfigure}%
    \begin{subfigure}[t]{.5\textwidth}
        \centering
        \setlength\fheight{2.5cm}
	\setlength\fwidth{.8\textwidth}
	\includetikz{ChafeeInput1_RelErr_Para}
	\caption{Relative error.}
	\label{fig:chafee_input1_err_Para}
    \end{subfigure}
    \caption{Parametric Chafee-Infante equation: a comparison of the original and reduced-order systems for the input $u^{(1)} = 10\left(\sin(\pi t)+1\right)$ and for different parameter values.}
    \label{fig:chafee_input1_para}
\end{figure}

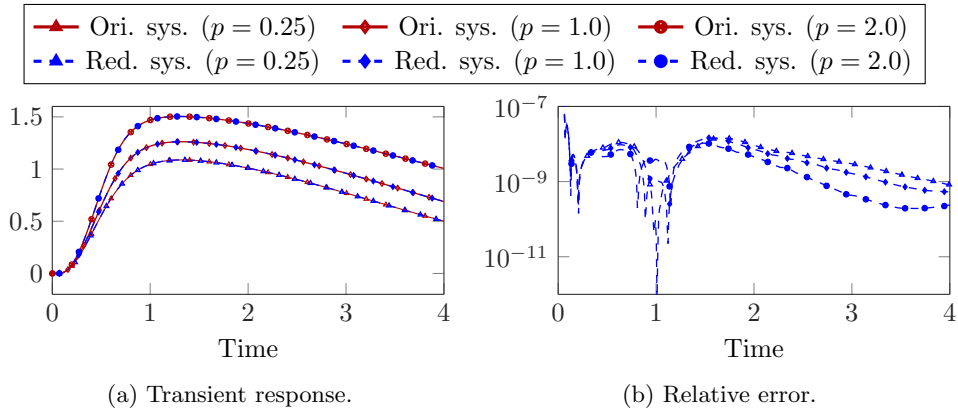
\begin{figure}[!tb]
    \centering
           \begin{tikzpicture}
    \begin{customlegend}[legend columns=3, legend style={/tikz/every even column/.append style={column sep=0.2cm}} , legend entries={Ori. sys. $(p = 0.25)$,Ori. sys. $(p = 1.0)$,Ori. sys. $(p = 2.0)$,Red. sys. $(p = 0.25)$, Red. sys. $(p = 1.0)$,  Red. sys. $(p = 2.0)$}, ]
    \addlegendimage{color=black!30!red,solid,line width=1.2pt,mark = triangle,every mark/.append style={solid},}
    \addlegendimage{color=black!30!red,solid,line width=1.2pt,mark = diamond,every mark/.append style={solid},}
    \addlegendimage{color=black!30!red,solid,line width=1.2pt,mark = otimes,every mark/.append style={solid},}
    \addlegendimage{color=blue,solid,line width=1.2pt,mark = triangle*,dashed,every mark/.append style={solid},}
    \addlegendimage{color=blue,solid,line width=1.2pt,mark = diamond*,dashed,every mark/.append style={solid},}
    \addlegendimage{color=blue,solid,line width=1.2pt,mark = otimes*,dashed,every mark/.append style={solid},}    \end{customlegend}
    \end{tikzpicture}  
    \begin{subfigure}[t]{.5\textwidth}
        \centering
        \setlength\fheight{2.5cm}
	\setlength\fwidth{.8\textwidth}
	\includetikz{ChafeeInput2_Response_Para}        
	\caption{Transient response.}
	\label{fig:chafee_input2_res_Para}
    \end{subfigure}%
    \begin{subfigure}[t]{.5\textwidth}
        \centering
        \setlength\fheight{2.5cm}
	\setlength\fwidth{.8\textwidth}
	\includetikz{ChafeeInput2_RelErr_Para}
	\caption{Relative error.}
	\label{fig:chafee_input2_err_Para}
    \end{subfigure}
    \caption{Parametric Chafee-Infante equation: a comparison of the original and reduced-order systems for the input $u^{(2)} = 5\left(e^{-t}t\right)$ and for different parameter values.}
    \label{fig:chafee_input2_Para}
\end{figure}

\subsection{Usage of CUR in ROM}
In this section, we illustrate the usage of the CUR matrix approximation to further approximate the nonlinear reduced terms. For this, we again consider the Chafee-Infante equation as considered in \Cref{subsec:chafee_nonpara}. Using the same setting as shown in \Cref{subsec:chafee_nonpara}, we aim at determining reduced cubic systems using one-sided and two-sided projections. First, in \Cref{figure:chafee_SVD_CUR}, we plot the relative decay of the singular values based on the Loewner pencil, obtained using the one-sided and two-sided projection matrices. We observe that the singular values based on the two-sided projection decay fast as compared to the one-sided projection. 

\begin{figure}[!tb]
  \centering
  \begin{tikzpicture}
      \begin{customlegend}[legend columns=-1, legend style={/tikz/every even column/.append style={column sep=1cm}} , legend entries={Two-sided projection, One-sided projection}, ]
      \addlegendimage{blue, line width =2pt}
      \addlegendimage{black!30!green,dashed,line width = 2pt}
      \end{customlegend}
  \end{tikzpicture}
  
  \setlength\fheight{3.0cm}
  \setlength\fwidth{.5\textwidth}
  \includetikz{ChafeeSingularDecay_CUR}
  \caption{Chafee-Infante equation: relative decay of singular values using the Loewner pencil, obtained via one-sided and two-sided projections.}
  \label{figure:chafee_SVD_CUR}
\end{figure}
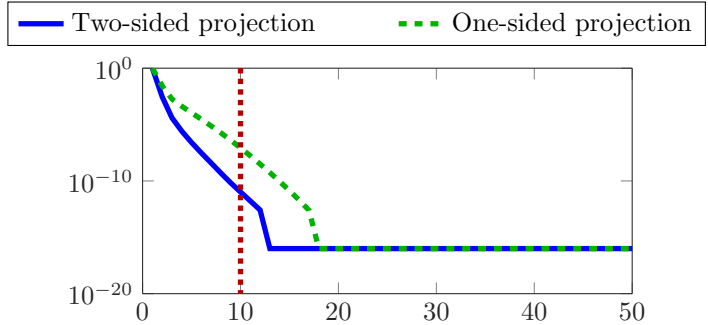

Next, we construct reduced-order systems of order $r = 10$ using one-sided and two-sided projections using \Cref{algo:nonParaPoly}, preserving the cubic nonlinear terms. As discussed in \Cref{sec:computationalCUR}, we can approximate these terms by making use of CUR matrix approximation. For CUR matrix approximation, we choose $60$ rows and $60$ columns of $\cV$ (defined in \cref{eq:H3_computation}), which are chosen based on an adaptive sampling proposed in \cite{wang2013improving}. We would like to mention that the number $60$ for row and columns is determined based on a trial and error method. An appropriate automatic method for CUR matrix approximation, being suitable for MOR, needs further research. To this end, we have four reduced systems as follows:
\begin{itemize}
 \item One-sided projection  (\texttt{OneSProj})
 \item One-sided projection, combined with CUR matrix approximation   \item[] \hfill (\texttt{OneSProj + CUR})
 \item Two-sided projection  (\texttt{TwoSProj})
 \item Two-sided projection, combined with CUR matrix approximation  \item[] \hfill  (\texttt{TwoSProj + CUR})
\end{itemize}
To compare the quality of these reduced-order systems, we perform the time-domain simulations of these systems with the original systems for two control inputs, the same as considered in \Cref{subsec:chafee_nonpara}. We observe that two-sided projection yields the best reduced-order systems among the four reduced-order systems. Furthermore, when the two-sided reduced-order system is combined with CUR matrix approximation, then we notice that the quality of the reduced-order system decreases a little but still provides a very good approximation of the original system. Interestingly, we also notice that CUR matrix approximation applied to the one-sided reduced-order system also performs really good, where the reduction in quality of \texttt{OsP} is very slim.

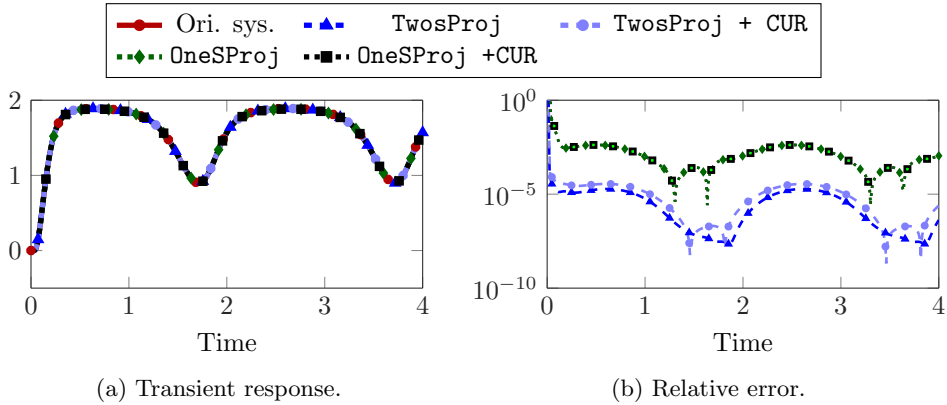
\begin{figure}[!tb]
    \centering
      \begin{tikzpicture}
    \begin{customlegend}[legend columns=3, legend style={/tikz/every even column/.append style={column sep=0.2cm}} , legend entries={Ori. sys.,\texttt{TwosProj},\texttt{TwosProj + CUR}, \texttt{OneSProj}, \texttt{OneSProj +CUR}}, ]
     \addlegendimage{color=black!30!red,solid,line width=2pt,mark = o,every mark/.append style={solid},mark size = 1.2pt}
    \addlegendimage{color=blue,dashed,line width=2pt,mark = triangle,every mark/.append style={solid},mark size = 1.2pt}
    \addlegendimage{color=blue!50,dashed,line width=2pt,mark = otimes*,every mark/.append style={solid},mark size = 1.2pt}
    \addlegendimage{color=black!60!green,dotted,line width=2pt,mark = diamond*,every mark/.append style={solid},mark size = 1.2pt}
    \addlegendimage{color=black,dotted,line width=2pt,mark = square*,every mark/.append style={solid},mark size = 1.2pt}
     \end{customlegend}
    \end{tikzpicture}    
    \begin{subfigure}[t]{.5\textwidth}
        \centering
        \setlength\fheight{2.5cm}
	\setlength\fwidth{.8\textwidth}
	\includetikz{ChafeeInput1_Response_CUR}        
	\caption{Transient response.}
	\label{fig:chafee_input1_res_CUR}
    \end{subfigure}%
    \begin{subfigure}[t]{.5\textwidth}
        \centering
        \setlength\fheight{2.5cm}
	\setlength\fwidth{.8\textwidth}
	\includetikz{ChafeeInput1_RelErr_CUR}
	\caption{Relative error.}
	\label{fig:chafee_input1_err_CUR}
    \end{subfigure}
    \caption{Chafee-Infante equation: a comparison of the original and (CUR combined) reduced-order systems for the input $u^{(1)} = 10\left(\sin(\pi t)+1\right)$.}
    \label{fig:chafee_input1_CUR}
\end{figure}

\begin{figure}[!tb]
    \centering
      \begin{tikzpicture}
    \begin{customlegend}[legend columns=3, legend style={/tikz/every even column/.append style={column sep=0.5cm}} , legend entries={Ori. sys.,\texttt{TwosProj},\texttt{TwosProj + CUR}, \texttt{OneSProj}, \texttt{OneSProj +CUR}}, ]
    \addlegendimage{color=black!30!red,solid,line width=2pt,mark = o,every mark/.append style={solid},mark size = 1.2pt}
    \addlegendimage{color=blue,dashed,line width=2pt,mark = triangle,every mark/.append style={solid},mark size = 1.2pt}
    \addlegendimage{color=blue!50,dashed,line width=2pt,mark = otimes*,every mark/.append style={solid},mark size = 1.2pt}
    \addlegendimage{color=black!60!green,dotted,line width=2pt,mark = diamond*,every mark/.append style={solid},mark size = 1.2pt}
    \addlegendimage{color=black,dotted,line width=2pt,mark = square*,every mark/.append style={solid},mark size = 1.2pt}
     \end{customlegend}
    \end{tikzpicture}   
    \begin{subfigure}[t]{.5\textwidth}
        \centering
        \setlength\fheight{2.5cm}
	\setlength\fwidth{.8\textwidth}
	\includetikz{ChafeeInput2_Response_CUR}        
	\caption{Transient response.}
	\label{fig:chafee_input2_res_CUR}
    \end{subfigure}%
    \begin{subfigure}[t]{.5\textwidth}
        \centering
        \setlength\fheight{2.5cm}
	\setlength\fwidth{.8\textwidth}
	\includetikz{ChafeeInput2_RelErr_CUR}
	\caption{Relative error.}
	\label{fig:chafee_input2_err_CUR}
    \end{subfigure}
    \caption{Chafee-Infante equation: a comparison of the original and (CUR combined) reduced-order systems for the input $u^{(2)} = 5\left(e^{-t}t\right)$.}
    \label{fig:chafee_input2_CUR}
\end{figure}
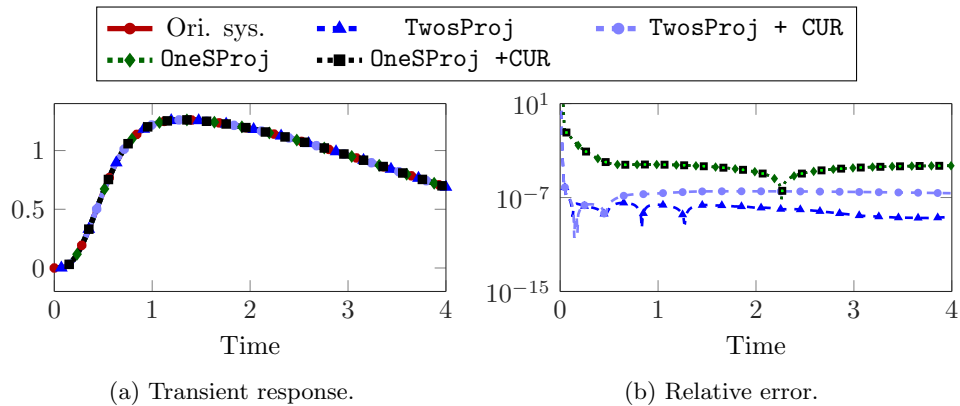

\section{Conclusions}
In this paper, we have discussed the construction of interpolating reduced-order systems for a parametric polynomial system. For this purpose, we have introduced generalized multi-variate  transfer functions for the systems. Furthermore, we have proposed algorithms, inspired by the Loewner approach, to generate good quality reduced-order systems in an automatic way. Furthermore, we have discussed related computational issues and also the usage of the CUR matrix approximation  in the simulations of reduced systems which may be helpful in some case. We have illustrated the efficiency of the approaches via several numerical experiments, where we have observed that preserving the original structure of the nonlinearity in reduced-order systems can lead to much better reduced-order systems. 

So far, in our numerical experiments, we have logarithmically or randomly chosen interpolation points  in  given intervals for frequency and parameters in order to apply \cref{algo:nonParaPoly,algo:para_MOR}. However, choosing these interpolation points wisely, instead of logarithmically or randomly, can ease the computational burden. In this direction, $\cH_2$-optimal framework \cite{morBenGG18} and an adaptive choice of interpolation points based on an error estimate \cite{morAhmadBF18} can be extended from quadratic-bilinear systems to polynomial systems.  Moreover, in \Cref{sec:computationalCUR}, we have discussed the computational aspect related to $\hH_{\xi}\kronF{\hx}{$\xi$} $ which can be eased with the help of CUR matrix approximation. However, we do not take the projection matrix $W$ into account for an approximation of the latter term. Thus, it would be valuable to employ the projection matrix $W$ as well, which could  improve the approximation quality. Also, an appropriate choice of CUR matrix approximation for MOR still demands some more investigation.  Last but not least, it will be of a great interest to the MOR community to extend the proposed methodology to other classes of nonlinear systems such as rational nonlinear systems, i.e., those systems containing nonlinear functions e.g., $\tfrac{1}{1+x}$ or $e^{-1/x}$. Such systems, for example, arise, in batch chromatography reactors \cite{guiochon2006fundamentals} and rector models \cite{kramer2018nonlinear}. However, we remark that such systems can be rewritten as a polynomial system by introducing auxiliary variables as discussed in \Cref{subsec:polynomialization}, but the goal would be to preserve the original structure of the nonlinearity in the reduced-order systems.

\bibliographystyle{siamplain}
\bibliography{mor}
\end{document}